\theoremstyle{definition}
\newtheorem{theorem}{Theorem}[section]
\newtheorem{proposition}[theorem]{Proposition}
\newtheorem{assumption}[theorem]{Assumption}
\theoremstyle{definition}
\newtheorem{definition}[theorem]{Definition}
\newtheorem{example}[theorem]{Example}
\theoremstyle{remark}
\newtheorem{remark}[theorem]{Remark}
\newcommand{\R}{\mathbb{R}}
\newcommand{\E}{\mathbb{E}}
\newcommand{\calX}{\mathcal{X}}
\newcommand{\piX}{\pi}
\newcommand{\bX}{{\bm{X}}}
\newcommand{\bx}{{\bm{x}}}
\newcommand{\by}{{\bm{y}}}
\newcommand{\bZ}{{\bm{Z}}}
\newcommand{\bz}{{\bm{z}}}
\newcommand{\bs}{{\bm{s}}}
\DeclareMathOperator{\trace}{trace}
\DeclareMathOperator*{\argmin}{arg\,min}
\renewcommand{\d}{\mathrm{d}}
\providecommand{\keywords}[1]
{
  \small	
  \textbf{Keywords } #1
}
\begin{document}

\title{Nonlinear dimension reduction for surrogate modeling using gradient information}

\author{\sc 
Daniele Bigoni\footnote{Center for Computational Science \& Engineering, Massachusetts Institute of Technology}, 
Youssef Marzouk$^*$, 
Cl\'ementine Prieur\footnote{Univ. Grenoble Alpes, Inria, CNRS, Grenoble INP, LJK, 38000 Grenoble, France}
and Olivier Zahm$^\dagger$\footnote{Corresponding author (\nolinkurl{olivier.zahm@inria.fr})}
}

\maketitle

\begin{abstract}
 We introduce a method for the nonlinear dimension reduction of a high-dimensional function $u:\R^d\rightarrow\R$, $d\gg1$. Our objective is to identify a nonlinear feature map $g:\R^d\rightarrow\R^m$, with a prescribed intermediate dimension $m\ll d$, so that $u$ can be well approximated by $f\circ g$ for some profile function $f:\R^m\rightarrow\R$. We propose to build the feature map by aligning the Jacobian $\nabla g$ with the gradient $\nabla u$, and we theoretically analyze the properties of the resulting $g$. Once $g$ is built, we construct $f$ by solving a gradient-enhanced least squares problem. Our practical algorithm makes use of a sample $\{\bx^{(i)},u(\bx^{(i)}),\nabla u(\bx^{(i)})\}_{i=1}^N$ and builds both $g$ and $f$ on adaptive downward-closed polynomial spaces, using cross validation to avoid overfitting. We numerically evaluate the performance of our algorithm across different benchmarks, and explore the impact of the intermediate dimension $m$. We show that building a nonlinear feature map $g$ can permit more accurate approximation of $u$ than a linear $g$, for the same input data set.

\end{abstract}

\keywords{high-dimensional approximation,
nonlinear dimension reduction, 
feature map,
Poincar\'e inequality, 
adaptive polynomial approximation.
}

\section{Introduction}

Computational models from a wide range of fields, such as physics, biology, and finance, involve large numbers of uncertain input parameters. Quantifying uncertainty is essential to improving the reliability of these models. Most uncertainty quantification analyses, however, require a large number of model evaluations. When a single evaluation is computationally expensive, a common practice is therefore to replace the model with a \emph{surrogate}---meaning an approximation that can be evaluated cheaply, without further evaluations of the original model. Yet constructing accurate approximations is a challenging task because many function approximation tools become inexpressive in high dimensions. This is often referred as to the \emph{curse of dimensionality}. This problem is exacerbated in the small-data regime, i.e., when few model evaluations are available.

This paper addresses the problem of reducing parameter space dimension from the perspective of surrogate modeling. 
We represent the model by a scalar-valued quantity of interest $u(\bx)$ which depends on a high dimensional parameter $\bx\in\R^d$ with $d\gg1$. When the parameter is uncertain, it is denoted by a random vector $\bX$ whose law models the uncertainty of the parameter. Dimension reduction consists in finding a map $g:\R^d\rightarrow\R^m$, with $m\ll d$, that captures the most ``relevant'' features of the parameters. This \emph{feature map} permits reduction of the parameter dimension from $d$ to $m$ by replacing $\bX$ with the $m$-dimensional random vector ${\bZ} = g(\bX)$. 
From the perspective of surrogate modeling, a good feature map should enable $u(\bX)$ to be well approximated as $f({\bZ}) = f\circ g(\bX)$, for some function $f:\R^m\rightarrow \R$ of $m$ variables only. If such a feature map $g$ is known in advance, $f$ can be constructed by minimizing the mean squared error,
$$
 \E\left[ ( u(\bX) - f\circ g(\bX) )^2 \right] ,
$$
over a class of functions of $m\ll d$ variables. 
This task is, in principle, easier than constructing a $d$-dimensional approximation to $u(\bX)$ directly.

\emph{Linear dimension reduction} corresponds to identifying linear feature maps $g$. Many linear dimension reduction strategies have been proposed in different research fields. Global sensitivity analysis \cite{saltelli2008global} identifies a set of $m$ parameters $g(\bx)=(x_{\sigma_1},\hdots,x_{\sigma_m})$ that best explain, in some statistical sense, the model output.
More generally, ridge functions \cite{pinkus2015ridge} are functions of the form $\bx\mapsto f\circ g(\bx)$ where $g(\bx)=W^T\bx$ for some matrix $W\in\R^{d\times m}$. In \cite{cohen2012capturing,fornasier2012learning}, the model $u$ is assumed to be a ridge function and $W$ is recovered via adaptive model query strategies.
Linear dimension reduction also arises in the statistical regression literature under the name \emph{sufficient dimension reduction} \cite{Adragni09,li2018sufficient}, where $W$ is constructed via sliced inverse regression (SIR) \cite{li1991sliced}, sliced average variance estimation (SAVE) \cite{cookweis1991}, and their variants. Closely related to the present work is the active subspace method \cite{Constantine2014a,constantine2015,lam2020multifidelity}, which identifies $W$ using gradients of the model. The recent papers \cite{zahm2020gradient,parente2020generalized} show that the active subspace method constructs the matrix $W$ by minimizing an upper bound for the mean squared error $\E[ ( u(\bX) - f\circ g(\bX) )^2 ]$ optained with the optimal profile function.
This result is particularly relevant because it motivates the construction of $g$ from the perspective of approximating $u$ in the least-squares sense.
Similar ideas are developped in \cite{zahm2018certified,cui2020data,brennan2020greedy} for the detection of informed subspace in the context of Bayesian inverse problems.

While linear dimension reduction methods are quite successful in many applications, they can fail to detect certain kinds of low-dimensional structure that a model might have; consider an isotropic $u(\bx)=h(\|\bx\|)$, for instance. 
\emph{Nonlinear dimension reduction} allows $g$ to detect such nonlinear features, in order to improve the approximation power of the composed approximation $f\circ g$.
Nonlinear dimension reduction methods have been developed and analyzed mostly in the community of sufficient dimension reduction; see for instance \cite{wu2008kernel,yeh2008nonlinear,lee2013general}, to cite just a few. 
In these works, the main idea is to use kernel methods to construct a nonlinear feature map $g(\bx)=W^T\Phi(\bx)$, where $\Phi(\bx)=(\Phi_1(\bx),\Phi_2(\bx),\hdots)$ are the eigenfunctions of an ad hoc kernel (typically a squared exponential or a polynomial kernel), and where the matrix $W$ is determined using inverse regression techniques (SIR, SAVE) on the transformed variables ${\bZ} = \Phi(\bX)$. Those methods, however, typically require a large sample size to accurately detect the low-dimensional structure of the model, and thus are not well suited to the small-data regime.
In the spirit of kernel principal component analysis (KPCA), \cite{lataniotis2020extending} builds a feature map of the form $g(\bx)=(\Phi_1(\bx),\hdots,\Phi_m(\bx))$ by taking the $m$ first eigenfunctions of a kernel whose hyperparameters (e.g., correlation length, smoothness) are determined by an outer optimization procedure. 

\subsection{Contribution}
The main contribution of this paper is to propose and analyze a nonlinear parameter space dimension reduction method, for the purpose of function approximation, using gradients of the model.
We assume here that the implementation of the computational model permits computing the gradient of $\bx\mapsto u(\bx)$ with respect to the parameters $\bx$.
Recent advances in computational science permit computing such gradients at a complexity comparable to that of evaluating the model itself, for instance using automatic differentiation \cite{griewank1989automatic} and/or adjoint state methods \cite{plessix2006review}.
Having access to gradient evaluations is a valuable workaround in small-data regimes, as $\nabla u(\bX)$ constitutes additional information for learning the model; see \cite{laurent2019overview}.
In this paper we propose to build $g$ by minimizing the loss function
$$
 J(g) = \E\left[ \big\| \nabla u(\bX) - \Pi_{\mathrm{range}(\nabla g(\bX)^T)} \nabla u(\bX) \big\|^2 \right] ,
$$
where $\Pi_{\mathrm{range}(\nabla g(\bX)^T)}$ denotes the orthogonal projector onto the range of the Jacobian $\nabla g(\bX)^T$.
Intuitively, minimizing this loss yields a feature map whose Jacobian $\nabla g(\bX)$ tends to be aligned with the gradient $\nabla u(\bX)$. 
Based on the same heuristic, the authors of \cite{zhang2019learning} introduce a different loss function to align $\nabla g(\bX)$ with $\nabla u(\bX)$ (see Appendix \ref{appA} for more details) but without proposing a deeper mathematical or computational analysis.
In the present paper, we prove that, under some assumptions, the loss $J(g)$ yields an upper bound on the mean squared error that can be obtained after constructing $f$; that is
$$
 \min_{f:\R^m\rightarrow\R}\E[(u(\bX)-f\circ g(\bX))^2] \leq \mathbb{C} ~ J(g) , 
$$
for some Poincar\'e-type constant $\mathbb{C}$ associated with $\bX$. We propose a quasi-Newton algorithm to minimize $J(g)$ and show that this algorithm is similar to the power iteration used to compute an eigendecomposition in the active subspace method.

In practice, we make use of a data set
$$
 \{{\bx}^{(i)},u({\bx}^{(i)}),\nabla u({\bx}^{(i)})\}_{i=1}^N ,
$$
to estimate the loss $J(g)$ and the mean squared error $\E[ ( u(\bX) - f\circ g(\bX) )^2 ]$.
We assume that the computational cost is dominated by the $N$ evaluations of $u({\bx}^{(i)})$ and $\nabla u({\bx}^{(i)})$, such that the cost for constructing $f$ and $g$ is relatively negligible. 
Borrowing ideas from \cite{beck2013sparsity,migliorati2015adaptive,cohen2018multivariate}, we represent both $f$ and $g$ on adaptive downward-closed polynomial spaces which are built using a greedy algorithm. In order to avoid overfitting, a cross validation procedure is used to determine when to stop the adaptive polynomial enrichment. 
We show that building a nonlinear feature map $g$ permits more accurate approximation of $u$ than a linear $g$, for the same input data set.

We emphasize that our method is a two step procedure: we first build the feature map $g$ by minimizing $J(g)$, and we then build $f$ by minimizing the mean squared error $\E[(u(\bX)-f\circ g(\bX))^2]$.
Another strategy would consist of minimizing the mean squared error \emph{jointly} over $f$ and $g$. For instance, in \cite{hokanson2018data} the authors build a linear $g$ and polynomial $f$ by employing dedicated optimization algorithms on Grassmann manifolds, without using gradients of the model. 
Nonlinear $g$ are also built in \cite{lataniotis2020extending} by joint minimization over $f$ and $g$. However, the structure of such optimization problems, and of the algorithms they employ, remain not well understood.

The rest of this paper is organized as follows. In Section \ref{sec:feature} we analyze the problem of approximating a function $u$ by a composition $f\circ g$. In particular, we give sufficient conditions on $\nabla g$ and $\nabla u$ so that there exists an $f$ such that $f\circ g = u$. We then introduce the loss $J(g)$ and describe its properties regarding the approximation problem. In Section \ref{sec:algos} we present algorithms for constructing $g$ and $f$ on adaptive polynomial spaces. Then, in Section \ref{sec:Numerics}, we illustrate the method on numerical examples.

\section{Dimension reduction via smooth feature maps}\label{sec:feature}

\subsection{Problem statement}

Let $u:\calX\rightarrow\R$ be a scalar-valued function defined on an open set $\calX\subseteq\R^d$ with $d\gg1$.
Our goal is to construct a feature map $g:\calX\rightarrow\R^m$ with $m\ll d$ such that, given a prescribed tolerance $\varepsilon>0$, there exists a function $f:\R^m\rightarrow\R$ for which 
\begin{equation}\label{eq:ErrorLessThanEpsilon}
\E\left[ ( u(\bX)-f(g(\bX)) )^2 \right] \leq \varepsilon^2.
\end{equation} 
Here, $\bX$ denotes a random vector with probability density function $\piX$ such that $\text{supp}(\piX)=\calX$, and $\E[\cdot]$ denotes the mathematical expectation. 
The function $f$ is called the profile function and $m$ the intermediate dimension.
The construction of the profile function is postponed to Section \ref{sec:AdaptiveProfile}, and we focus here on how to find a suitable feature map $g$ such that \eqref{eq:ErrorLessThanEpsilon} is attainable for some $f$.
We note that the $f$ which minimizes the above mean squared error is the conditional expectation $f:{\bz} \mapsto \E[u(\bX)|g(\bX)={\bz}]$. This well-known result will be used later.
We now give two trivial solutions to \eqref{eq:ErrorLessThanEpsilon} which help to understand the problem:
\begin{itemize}
 \item With $g=\text{Id}$, the identity function on $\calX$, the profile function $f=u$ yields $f\circ g = u$. In this case we have $m=d$.
 \item With $g=u$, the profile $f=\text{Id}$ also yields $f\circ g = u$ with an intermediate dimension $m=1$.
\end{itemize}
Those two trivial solutions are not satisfactory either because $m=d\gg1$ is large or because the computation of $g=u$ is untractable. The balance between the intermediate dimension $m$ and the complexity of the feature map $g$ appears as a central question in dimension reduction.
Our goal is to construct $g$ in a tractable space $\mathcal{G}_m$ of functions from $\calX$ to $\R^m$.
For instance, $\mathcal{G}_m $ could be a space of multivariate polynomial functions, a reproducing kernel Hilbert space, etc. We emphasize the necessity of \emph{constraining} the function $g$ to belong to a space of tractable functions; otherwise problem \eqref{eq:ErrorLessThanEpsilon} makes no sense, as it admits a trivial solution with $g=u$.

\subsection{Aligned gradients}\label{sec:AlignedGradients}

From now on, we assume that $u:\calX\rightarrow\R$ is continuously differentiable over the open set $\calX\subseteq \R^d$ and that all the functions in $\mathcal{G}_m$ are also continuously differentiable.
\begin{assumption}\label{assu:RegularityOfUandG}
 $u\in C^1(\calX;\R)$ and $g\in\mathcal{G}_m\subseteq C^1(\calX;\R^m)$. 
\end{assumption}
Let us assume for a moment that $u$ is exactly of the form $u=f\circ g$ for some $g:\calX\rightarrow\R^m$ and $f:\R^m\rightarrow\R$. Denoting by $\nabla f({\bz})\in\R^{m}$ the gradient of $f$ at point ${\bz}\in\R^m$, and by
$$
 \nabla g(\bx) = \begin{pmatrix}
                \nabla g_1(\bx)^T \\ \vdots \\ \nabla g_m(\bx)^T 
               \end{pmatrix}
 \in\R^{m\times d},
$$
the Jacobian\footnote{We use the standard convention that each row of the Jacobian matrix is the transpose of the gradient of each component.} of $g$ at point $\bx\in\calX$, the chain rule allows writing $ \nabla u(\bx) = \nabla g(\bx)^T \nabla f(g(\bx))$ for any $\bx\in\calX$. In this case, $\nabla u(\bx)$ lies in the subspace $\mathrm{range}(\nabla g(\bx)^T)$ for any $\bx\in\calX$. In short, we have
$$
 u=f\circ g 
 \quad\Longrightarrow\quad 
 \nabla u(\bx)\in \mathrm{range}(\nabla g(\bx)^T), \quad \forall\bx\in\calX.
$$
Conversely, one can ask whether a function $u$ which satisfies $\nabla u(\bx) \in \mathrm{range}(\nabla g(\bx)^T)$ for some vector-valued differentiable function $g$ is necessarily of the form of $u=f\circ g$ for some $f$. The following proposition gives a positive answer to this question, under additional assumptions on $g$.

\begin{assumption}\label{assu:pathConnectedG}
The pre-image under $g$ of any point is \textit{smoothly pathwise-connected}; that is, for any ${\bz}\in\text{Im}(g) \subseteq \mathbb{R}^m$ and for any points $\bx, \by$ in the preimage $g^{-1}({\bz})=\{\bs\in\calX:g(\bs)={\bz}\}$, 
 there exists a continuously differentiable function $\gamma:[0,1]\rightarrow g^{-1}({\bz})$ such that $\gamma(0)=\bx$ and $\gamma(1)={\by}$.
\end{assumption}

\begin{proposition}\label{prop:NablaUinNablaG}
 Under Assumptions \ref{assu:RegularityOfUandG} and \ref{assu:pathConnectedG}, if $u:\calX\rightarrow\R$ and $g:\calX\rightarrow\R^m$ satisfy
 \begin{equation}\label{eq:NablaUinNablaG}
  \nabla u(\bx) \in \mathrm{range}(\nabla g(\bx)^T) ,
 \end{equation}
 for any $\bx\in\calX$, then $u=f\circ g$ for some function $f:\R^m\rightarrow\R$.
 
\end{proposition}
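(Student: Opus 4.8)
The plan is to show that the value of $u$ is constant on each level set $g^{-1}(\bz)$, which immediately allows us to define $f(\bz)$ as that common value for $\bz\in\mathrm{Im}(g)$ (and arbitrarily, say $0$, outside $\mathrm{Im}(g)$), giving $u = f\circ g$ by construction. The key geometric input is Assumption~\ref{assu:pathConnectedG}: given $\bz\in\mathrm{Im}(g)$ and two points $\bx,\by\in g^{-1}(\bz)$, pick a $C^1$ path $\gamma:[0,1]\to g^{-1}(\bz)$ with $\gamma(0)=\bx$, $\gamma(1)=\by$. I then want to argue that $t\mapsto u(\gamma(t))$ is constant, so that $u(\bx)=u(\by)$.

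To see this, differentiate along the path: by the chain rule, $\frac{\d}{\d t}u(\gamma(t)) = \nabla u(\gamma(t))^T \dot\gamma(t)$. Because $\gamma$ stays inside the level set $g^{-1}(\bz)$, the composition $g\circ\gamma$ is constant, so $\frac{\d}{\d t} g(\gamma(t)) = \nabla g(\gamma(t))\,\dot\gamma(t) = 0$, i.e. $\dot\gamma(t)\in\ker\big(\nabla g(\gamma(t))\big)$ for every $t$. Now invoke the hypothesis \eqref{eq:NablaUinNablaG}: at the point $\bx' = \gamma(t)$ we have $\nabla u(\bx')\in\mathrm{range}(\nabla g(\bx')^T)$. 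Since $\mathrm{range}(\nabla g(\bx')^T)$ is precisely the orthogonal complement of $\ker(\nabla g(\bx'))$ in $\R^d$, the vector $\nabla u(\bx')$ is orthogonal to $\dot\gamma(t)$, hence $\frac{\d}{\d t}u(\gamma(t)) = 0$. As this holds for all $t\in[0,1]$ and $u\circ\gamma$ is continuously differentiable (composition of $C^1$ maps), $u(\gamma(t))$ is constant on $[0,1]$, giving $u(\bx) = u(\gamma(0)) = u(\gamma(1)) = u(\by)$.

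With constancy of $u$ on level sets established, define $f:\R^m\to\R$ by $f(\bz) := u(\bx)$ for any $\bx\in g^{-1}(\bz)$ when $\bz\in\mathrm{Im}(g)$; this is well-defined by the argument above, and $u(\bx) = f(g(\bx))$ for all $\bx\in\calX$ by construction, completing the proof. I do not need any regularity of $f$ itself here — the proposition only claims existence of $f$, not smoothness, which is fortunate since $f$ need not even be continuous without further assumptions on $g$ and $\mathrm{Im}(g)$.

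The main obstacle, and the place to be careful, is the linear-algebra identity $\mathrm{range}(A^T) = \ker(A)^\perp$ applied with $A = \nabla g(\bx')\in\R^{m\times d}$: this is standard (the fundamental theorem of linear algebra / orthogonality of row space and null space) and holds for any matrix, so it is really a routine point, but it is the crux that converts the algebraic hypothesis \eqref{eq:NablaUinNablaG} into the orthogonality $\nabla u(\bx')\perp\dot\gamma(t)$ needed to kill the derivative. A secondary point worth a sentence is justifying that $\frac{\d}{\d t}(u\circ\gamma)$ exists and the chain rule applies — this follows from $u\in C^1(\calX;\R)$ (Assumption~\ref{assu:RegularityOfUandG}), $\gamma\in C^1$, and the fact that $\gamma$ takes values in $\calX$ by Assumption~\ref{assu:pathConnectedG}; similarly $g\circ\gamma$ being constant uses $g\in C^1$. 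Everything else is bookkeeping.
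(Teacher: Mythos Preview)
Your proof is correct and follows essentially the same approach as the paper: both argue that $u$ is constant on each level set $g^{-1}(\bz)$ by taking a $C^1$ path $\gamma$ in the level set, observing that $\dot\gamma(t)$ is orthogonal to $\mathrm{range}(\nabla g(\gamma(t))^T)$ (the paper phrases this as $\gamma'(t)\perp\mathrm{span}\{\nabla g_1,\ldots,\nabla g_m\}$, you phrase it as $\dot\gamma(t)\in\ker(\nabla g)=\mathrm{range}(\nabla g^T)^\perp$), and concluding $(u\circ\gamma)'=0$ from the hypothesis. The definition of $f$ is then identical.
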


\begin{proof}
 We first show that relation \eqref{eq:NablaUinNablaG} implies the following property: if $g(\bx)=g({\by})$ for some $\bx, \by \in\calX$, then $u(\bx)=u({\by})$.
 Thus, let $\bx,{\by}\in\calX$ be any two points such that $g(\bx)=g({\by})$. By Assumption \ref{assu:pathConnectedG}, the pre-image $g^{-1}(\bz), \bz=g(\bx)$, is smoothly pathwise-connected so that there exsits a continuously differentiable path $\gamma:[0,1]\rightarrow\calX$ from $\bx=\gamma(0)$ to ${\by}=\gamma(1)$ such that $g(\gamma(t))=\bz$ for any $t\in[0,1]$. For any $1\leq i \leq m$ the function $g_i\circ\gamma:[0,1]\rightarrow\R$ is constant so that $(g_i\circ\gamma)'(t) = \nabla g_i(\gamma(t))^T\gamma'(t)=0$ for any $t\in[0,1]$, where $\gamma'(t)\in\R^d$ denotes the derivative of $\gamma$ at point $t$. This means that, for any $t\in[0,1]$, the vector $\gamma'(t)$ is orthogonal to $\text{span}\{\nabla g_1(\gamma(t)),\hdots,\nabla g_m(\gamma(t))\}=\mathrm{range}(\nabla g(\gamma(t))^T)$. By \eqref{eq:NablaUinNablaG} we then have 
 $$
  (u\circ\gamma)'(t) = \nabla u(\gamma(t))^T \gamma'(t) = 0,
 $$
 which implies that the continuous function $u\circ \gamma:[0,1]\rightarrow\R$ is constant. Then $u(\bx)=u(\gamma(0))=u(\gamma(1))=u({\by})$.
 
 Now we build a function $f:\R^m\rightarrow\R$ such that $u=f\circ g$. Such a function needs to be defined only on the image $g(\calX)\subseteq\R^m$ and can be set to zero on the complement of $g(\calX)$ in $\R^m$. We define $f$ such that for any ${\bz}\in g(\calX)$, $f({\bz})=u(\bx)$ where $\bx\in\calX$ is any point such that $g(\bx)={\bz}$. Even if this $\bx$ is not unique, $f({\bz})$ is uniquely defined because $u(\bx)=u({\by})$ whenever $g({\by})=g(\bx)$. By construction we have $f(g(\bx))=u(\bx)$ for any $\bx\in\calX$, which concludes the proof.
\end{proof}

\begin{figure}[t]
  \centering 
  \includegraphics[width = 0.8\textwidth]{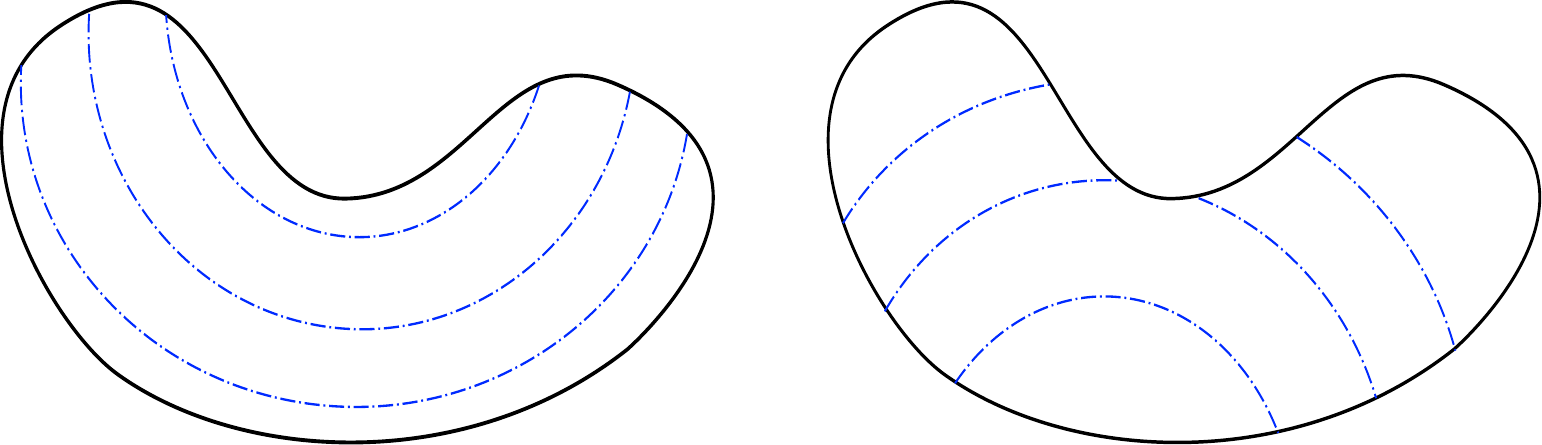} 
  \caption{Illustration of Assumption \ref{assu:pathConnectedG}: the black line represents the parameter space $\calX$ and the blue lines represent different pre-images of two candidate functions $g$. On the left, the function $g$ satisfies Assumption \ref{assu:pathConnectedG}, but on the right, the level sets of the function $g$ are not pathwise-connected.}
  \label{fig:path_connected}
\end{figure}

Let us note that Assumption \ref{assu:pathConnectedG} is a necessary condition in Proposition \ref{prop:NablaUinNablaG}. Indeed, if the pre-images of $g$ are not smoothly pathwise-connected, as in the right plot of Figure \ref{fig:path_connected}, one can build a function $u$ which satisfies \eqref{eq:NablaUinNablaG} without being of the form $f\circ g$. For example, think of a smooth function $u$ which is constant on each of the connected parts of $g^{-1}({\bz})$ (so that \eqref{eq:NablaUinNablaG} is satisfied) but which takes different values on each of those connected parts (so that $u\neq f\circ g$).

Here are some examples where Assumption \eqref{assu:pathConnectedG} is satisfied.

\begin{example}[Affine feature map]\label{example:AffineMaps}
 Any function $g(\bx) = A\bx+b$ with $A\in\R^{m\times d}$ and $b\in\R^m$ satisfies Assumption \ref{assu:pathConnectedG}, provided $\calX$ is a convex set. Indeed, for any ${\bz}\in\R^m$, $\bx,{\by}\in g^{-1}({\bz})$, and $t\in[0,1]$, the quantity $\gamma(t) \coloneqq t\bx+(1-t){\by}$ belongs to $\calX$ and it satisfies  $g(\gamma(t))=t(A\bx+b)+(1-t)(A{\by}+b) ={\bz}$, which shows that $\gamma$ is a continuously differentiable path in $g^{-1}({\bz})$ from $\bx$ to ${\by}$.
\end{example}

\begin{example}[Feature map following from a $C^1$-diffeomorphism]\label{example:C1diffeo}
 Assume $\calX$ is convex.
 One way to build functions which satisfy Assumption \ref{assu:pathConnectedG} is to consider a $C^1$-diffeomorphism $\phi:\calX\rightarrow\calX$, meaning a continuously differentiable invertible function whose inverse is continuously differentiable, and to define $g(\bx) = (\phi_1(\bx),\hdots,\phi_m(\bx))$ where $\phi_i(\bx)$ is the $i$-th component of $\phi(\bx)$. Such a $g$ satisfies Assumption \ref{assu:pathConnectedG}: for any $\bx,{\by}\in\calX$ such that $g(\bx)=g({\by})={\bz}$, the function 
 $$
  \gamma(t) = \phi^{-1}\big( t\phi({\by}) + (1-t)\phi(\bx)\big),
 $$
 defined for $t\in[0,1]$ is a smooth path from $\bx=\gamma(0)$ to ${\by}=\gamma(1)$ as a composition of smooth functions. It is well defined because $t\phi({\by}) + (1-t)\phi(\bx)$ is in $\calX$ by convexity. 
 By construction we have $\phi(\gamma(t)) = t\phi({\by}) + (1-t)\phi(\bx)$ and the $m$ first components of that relation yield $g(\gamma(t)) = tg({\by}) + (1-t)g(\bx) = {\bz}$. This shows that $\gamma(t) \in g^{-1}({\bz})$, so that $g$ satisfies Assumption \ref{assu:pathConnectedG}.
\end{example}

\begin{example}[Polynomial feature map]
 Consider the case where $g$ is a polynomial function on $\calX=\R^d$. Assumption \ref{assu:pathConnectedG} is satisfied if and only if for any ${\bz}\in g(\calX)$, the zeros of the polynomial $x\mapsto g(x)-z$ are pathwise-connected. Calculating the number of connected components (i.e., the zeroth Betti number) of an algebraic set like $\{\bx:g(\bx)-{\bz}=0\}$ is a difficult question, commonly encountered in algebraic geometry. Unfortunately, there is no easy answer to this question; see \cite{scheiblechner2007complexity}. Still, we show later in Section \ref{sec:Numerics} that polynomials work well from a numerical point of view, even though Assumption \ref{assu:pathConnectedG} is not checked in practice.
\end{example}

\subsection{Aligning the gradients}\label{sec:AligningGradients}

Motivated by Proposition \ref{prop:NablaUinNablaG}, we propose to build $g$ by minimizing a cost function which measures how ``aligned'' are the gradient $\nabla u(\bx)$ and the subspace $\mathrm{range}(\nabla g(\bx)^T)$. For any $g:\calX\rightarrow\R^m$ we introduce the cost function
\begin{equation}\label{eq:J}
 J(g) = \E\left[ \big\| \nabla u(\bX) - \Pi_{\mathrm{range}(\nabla g(\bX)^T)} \nabla u(\bX) \big\|^2 \right] ,
\end{equation}
where $\Pi_{\mathrm{range}(\nabla g(\bX)^T)} \in \R^{d\times d}$ denotes the orthogonal projector onto $\mathrm{range}(\nabla g(\bX)^T)$ and $\|\cdot\|$ is the Euclidean norm on $\mathbb{R}^d$. Obviously we have $J(g)\geq0$. The following proposition shows that if $J(g)=0$ then there exists a profile function $f$ such that $u=f\circ g$.
\begin{proposition}\label{prop:wellsuited}
 Let $u:\calX\rightarrow\R$ and $g:\calX\rightarrow\R^m$ be continuously differentiable functions such that $J(g)=0$.
 If $g$ satisfies Assumption \ref{assu:pathConnectedG} and if 
 \begin{equation}\label{eq:gNonDegenerates}
  \mathrm{rank}(\nabla g(\bx)^T)=m,
 \end{equation}
 for any $\bx\in\calX$, then there exists a function $f : \R^m \rightarrow \R$ such that $u=f\circ g$. 
\end{proposition}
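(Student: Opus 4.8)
The plan is to show that the hypothesis $J(g)=0$ forces the pointwise inclusion $\nabla u(\bx)\in\mathrm{range}(\nabla g(\bx)^T)$ for every $\bx\in\calX$, and then to invoke Proposition \ref{prop:NablaUinNablaG} directly. The bridge from the integral identity $J(g)=0$ to the pointwise statement is a continuity argument, and this is precisely where the constant-rank hypothesis \eqref{eq:gNonDegenerates} enters.

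First I would set $h(\bx):=\big\|\nabla u(\bx)-\Pi_{\mathrm{range}(\nabla g(\bx)^T)}\nabla u(\bx)\big\|^2\geq0$, so that $J(g)=\E[h(\bX)]=\int_{\calX}h(\bx)\,\piX(\bx)\,\d\bx$. The key observation is that $h$ is continuous on $\calX$. Indeed, since $\mathrm{rank}(\nabla g(\bx)^T)=m$ for all $\bx$, the matrix $\nabla g(\bx)\nabla g(\bx)^T\in\R^{m\times m}$ is invertible and the orthogonal projector has the closed form $\Pi_{\mathrm{range}(\nabla g(\bx)^T)}=\nabla g(\bx)^T(\nabla g(\bx)\nabla g(\bx)^T)^{-1}\nabla g(\bx)$. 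The right-hand side depends continuously on $\bx$, because $g\in C^1$ makes $\bx\mapsto\nabla g(\bx)$ continuous and matrix inversion is continuous on the set of invertible matrices; together with continuity of $\bx\mapsto\nabla u(\bx)$ from Assumption \ref{assu:RegularityOfUandG}, this yields continuity of $h$.

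Next I would use the elementary fact that a nonnegative continuous function whose integral against $\piX$ vanishes must vanish on $\text{supp}(\piX)=\calX$. Concretely, if $h(\bx_0)>0$ for some $\bx_0\in\calX$, continuity provides a neighbourhood $U\subseteq\calX$ of $\bx_0$ on which $h\geq c$ for some $c>0$; since $\bx_0\in\text{supp}(\piX)$ we have $\int_U\piX>0$, hence $J(g)\geq c\int_U\piX>0$, a contradiction. Therefore $h\equiv0$ on $\calX$, which is exactly the statement that $\nabla u(\bx)=\Pi_{\mathrm{range}(\nabla g(\bx)^T)}\nabla u(\bx)$, i.e. $\nabla u(\bx)\in\mathrm{range}(\nabla g(\bx)^T)$, for every $\bx\in\calX$.

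Finally, Assumptions \ref{assu:RegularityOfUandG} and \ref{assu:pathConnectedG} hold by hypothesis, so Proposition \ref{prop:NablaUinNablaG} applies and produces $f:\R^m\rightarrow\R$ with $u=f\circ g$, which completes the proof. I expect the only subtle point to be the continuity of $\bx\mapsto\Pi_{\mathrm{range}(\nabla g(\bx)^T)}$, which is exactly why hypothesis \eqref{eq:gNonDegenerates} is needed: without constant rank the projector can jump across a $\piX$-null set, and the passage from $J(g)=0$ to the everywhere-pointwise inclusion would break down (one could then only conclude the inclusion $\piX$-almost everywhere, which is not enough to run the path argument of Proposition \ref{prop:NablaUinNablaG}).
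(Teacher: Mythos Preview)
Your proof is correct and follows essentially the same approach as the paper: show continuity of $\bx\mapsto\Pi_{\mathrm{range}(\nabla g(\bx)^T)}$ via the closed-form expression $\nabla g(\bx)^T(\nabla g(\bx)\nabla g(\bx)^T)^{-1}\nabla g(\bx)$ (valid by the rank hypothesis), deduce that the nonnegative continuous integrand vanishes on $\text{supp}(\piX)=\calX$, and then invoke Proposition~\ref{prop:NablaUinNablaG}. The paper differs only cosmetically, spelling out an explicit $\varepsilon$--$\delta$ bound for the continuity of $\bx\mapsto(\nabla g(\bx)\nabla g(\bx)^T)^{-1}$ rather than citing continuity of matrix inversion directly.
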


Before we give the proof of Proposition \ref{prop:wellsuited}, let us comment on condition \eqref{eq:gNonDegenerates}. 
This condition is commonly encountered in implicit function theory. It ensures that, for all ${\bz}\in g(\calX)$, the level set  $g^{-1}({\bz})$ is a smooth manifold of dimension $d-m$; see for instance Theorem 4.3.1 in \cite{krantz2012implicit}. One can easily check that \eqref{eq:gNonDegenerates} is satisfied in the case of affine feature maps $g(\bx)=A\bx+b$ with $\mathrm{rank}(A)=m$, but also in the case of feature maps following  from a $C^1$-diffeomorphism; see Example \ref{example:C1diffeo}.

\begin{proof}[Proof of \cref{prop:wellsuited}]
 Let us assume for a moment that $\bx\mapsto\Pi_{\mathrm{range}(\nabla g(\bx)^T)}$ is a continuous function from $\calX$ to $\R^{d\times d}$. 
 Then $\bx\mapsto \| \nabla u(\bx) - \Pi_{\mathrm{range}(\nabla g(\bx)^T)} \nabla u(\bx) \|$ is a continuous function, via products and sums of continuous functions.
 As $J(g)=0$, then $ \big\| \nabla u(\bx) - \Pi_{\mathrm{range}(\nabla g(\bx)^T)} \nabla u(\bx) \big\|$ is equal to zero $\piX$-almost surely. By continuity, we have that $\| \nabla u(\bx) - \Pi_{\mathrm{range}(\nabla g(\bx)^T)} \nabla u(\bx) \|$ is equal to zero for all $\bx\in\text{supp}(\piX)=\calX$, so that $\nabla u(\bx) \in \mathrm{range}(\nabla g(\bx)^T)$ holds for any $\bx\in\calX$. 
 Together with Assumption \ref{assu:pathConnectedG}, Proposition \ref{prop:NablaUinNablaG} ensures the existence of $f:\R^m\rightarrow\R$ such that $u=f\circ g$.

 It remains to show that $\bx\mapsto\Pi_{\mathrm{range}(\nabla g(\bx)^T)}$ is continuous. 
 Let $M(\bx)=\nabla g(\bx)\nabla g(\bx)^T\in\R^{m\times m}$. 
 By Assumption \eqref{eq:gNonDegenerates} $M(\bx)$ is invertible and we can write $\Pi_{\mathrm{range}(\nabla g(\bx)^T)} = \nabla g(\bx)^T M(\bx)^{-1} \nabla g (\bx)$ for any $\bx\in\calX$. 
 For any $\delta \in \R^d$ we can write
 \begin{align*}
  \|M(\bx)^{-1}-M(\bx+\delta)^{-1}\|_\text{sp}
  &\leq \|M(\bx+\delta)^{-1}\|_\text{sp}\|M(\bx+\delta)M(\bx)^{-1}-I_d\|_\text{sp}\\
  & = \displaystyle \lambda_{\min}(M(\bx+\delta))^{-1}\|M(\bx+\delta)M(\bx)^{-1}-I_d\|_\text{sp},
 \end{align*}
 where $\|\cdot\|_\text{sp}$ denotes the spectral norm and where $\lambda_{\min}(M(\bx+\delta))$ denotes the smallest eigenvalue of $M(\bx+\delta)$. Because the eigenvalues are continuous with respect to the matrix entries (see \cite{stewart1990matrix}) and by Assumption \eqref{eq:gNonDegenerates}, we have $\lambda_{\min}(M(\bx+\delta))\rightarrow \lambda_{\min}(M(\bx))>0$ as $\delta\rightarrow0$. Therefore we have $\lambda_{\min}(M(\bx+\delta))^{-1}\|M(\bx+\delta)M(\bx)^{-1}-I_d\|_\text{sp} \rightarrow \lambda_{\min}(M(\bx))^{-1}\|I_d-I_d\|_\text{sp}=0$. This shows the continuity of $\bx\mapsto M(\bx)^{-1}$ and therefore the continuity of $\bx\mapsto\Pi_{\mathrm{range}(\nabla g(\bx)^T)}=\nabla g(\bx)^T M(\bx)^{-1} \nabla g (\bx)$. This concludes the proof.
\end{proof}

Next we consider the minimization problem
\begin{equation}\label{eq:minJ}
 \min_{g\in\mathcal{G}_m} J(g),
\end{equation}
where $\mathcal{G}_m\subseteq C^1(\calX;\R^m)$ is a set of tractable functions. 
In general, given some choice of $\mathcal{G}_m$,
the minimum of the cost function will not be exactly zero, and thus an assumption of Proposition \ref{prop:wellsuited} will not hold.
Using arguments based on Poincar\'e inequalities, Proposition \ref{prop:PoincareConditional} below shows that, under specific assumptions, there exists at least one function $f:\R^m\rightarrow\R$ such that $\E[ ( u(\bX)-f(g(\bX)) )^2 ]$ is of the same order of magnitude as $J(g)$. 
In other words, we will be able to control the $L^2$-error in an approximation of $u$ by making $J(g)$ small.
Let us first introduce the Poincar\'e inequality associated with a random variable.

\begin{definition}[Poincar\'e inequality]
 Given a continuous random variable $\bX_\mathcal{M}$ taking values in a smooth manifold $\mathcal{M}$, the \emph{Poincar\'e constant} $\mathbb{C}(\bX_\mathcal{M})\in[0,+\infty]$ is defined as the smallest constant such that
 \begin{equation}\label{eq:PoincareInequality}
  \E\left[ \big( h(\bX_\mathcal{M}) - \E[h(\bX_\mathcal{M})] \big)^2 \right] \leq \mathbb{C}(\bX_\mathcal{M}) \, \E\left[ \big\| \nabla h(\bX_\mathcal{M}) \big\|^2\right] 
 \end{equation}
 holds for any continuously differentiable function $h:\mathcal{M}\rightarrow\R$. 
 Here, the gradient $\nabla h({\bz})$ is a vector in $T_{\bz}(\mathcal{M})$, the tangent space of $\mathcal{M}$ at point ${\bz}\in\mathcal{M}$.
 We say that $\bX_\mathcal{M}$ satisfies the \emph{Poincar\'e inequality} \eqref{eq:PoincareInequality} if $\mathbb{C}(\bX_\mathcal{M})<+\infty$.
\end{definition}

We refer to \cite{bakry2008simple} for a simple proof of the Poincar\'e inequality for a large class of probability measures.

\begin{proposition}\label{prop:PoincareConditional}
 
 Assume that the set of functions $\mathcal{G}_m\subseteq C^1(\calX;\R^m)$ is such that $\mathrm{rank}(\nabla g(\bx)^T)=m$ for all $g\in\mathcal{G}_m$ and all $\bx\in\calX$. Furthermore, assume that $\mathcal{G}_m$ satisfies
 \begin{equation}\label{eq:PoincareConstantConditional}
  \mathbb{C}(\bX|\mathcal{G}_m) \coloneqq \sup_{g \in \mathcal{G}_m}~ \sup_{{\bz}\in g(\calX)} \mathbb{C}(\bX \, | \, g(\bX)={\bz}) ~ <\infty,
 \end{equation}
 where $\bX \, \vert \, g(\bX)={\bz}$ denotes the random variable obtained by conditioning $\bX$ on the event $g(\bX)={\bz}$.
 Then, for any $g\in\mathcal{G}_m$, there exists a measurable $f:\R^m\rightarrow\R$ such that
  \begin{equation}\label{eq:PoincareConditional}
 \E\left[ \big( u(\bX)-f(g(\bX)) \big)^2 \right] \leq \mathbb{C}(\bX|\mathcal{G}_m) J(g),
 \end{equation}
 where $J(g)$ is defined as in \eqref{eq:J}.
\end{proposition}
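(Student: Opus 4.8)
The plan is to take $f$ to be the conditional expectation $f(\bz) = \E[u(\bX)\mid g(\bX)=\bz]$, which by the Doob--Dynkin lemma is Borel measurable and which, as recalled above, is the minimizer of $f\mapsto\E[(u(\bX)-f(g(\bX)))^2]$. For this choice, the law of total expectation (disintegration of the law of $\bX$ along the level sets of $g$) gives
$$
\E\big[ ( u(\bX)-f(g(\bX)) )^2 \big] = \int_{g(\calX)} \E\Big[ \big( u(\bX) - \E[u(\bX)\mid g(\bX)=\bz] \big)^2 \;\Big|\; g(\bX)=\bz \Big] \, \mathrm{d}(g_{\#}\piX)(\bz),
$$
where $g_{\#}\piX$ is the pushforward of $\piX$ under $g$; the inner term is the variance of $u$ with respect to the conditional random variable $\bX\mid g(\bX)=\bz$.

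Next I would invoke the Poincar\'e inequality on each level set. Since $\mathrm{rank}(\nabla g(\bx)^T)=m$ for all $\bx\in\calX$, the implicit function theorem (e.g.\ Theorem 4.3.1 in \cite{krantz2012implicit}) shows that $\mathcal{M}_{\bz} \coloneqq g^{-1}(\bz)$ is a $C^1$ submanifold of $\calX$ of dimension $d-m$, and the conditional random variable $\bX\mid g(\bX)=\bz$ takes values in $\mathcal{M}_{\bz}$ and is continuous there (as is implicit in the finiteness assumed in \eqref{eq:PoincareConstantConditional}). Applying \eqref{eq:PoincareInequality} to the $C^1$ function $u|_{\mathcal{M}_{\bz}}$ yields
$$
\E\Big[ \big( u(\bX) - \E[u(\bX)\mid g(\bX)=\bz] \big)^2 \;\Big|\; g(\bX)=\bz \Big] \leq \mathbb{C}(\bX \mid g(\bX) = \bz) \; \E\Big[ \big\| \nabla_{\mathcal{M}_{\bz}} u(\bX) \big\|^2 \;\Big|\; g(\bX) = \bz \Big],
$$
where $\nabla_{\mathcal{M}_{\bz}} u(\bx)\in T_{\bx}(\mathcal{M}_{\bz})$ denotes the Riemannian gradient of the restriction of $u$ to $\mathcal{M}_{\bz}$.

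The crux of the argument is a pointwise geometric identity. Differentiating $g(\gamma(t))=\bz$ along curves $\gamma$ lying in $\mathcal{M}_{\bz}$ (exactly as in the proof of Proposition~\ref{prop:NablaUinNablaG}) shows $T_{\bx}(\mathcal{M}_{\bz}) = \ker(\nabla g(\bx)) = \mathrm{range}(\nabla g(\bx)^T)^\perp$; hence the Riemannian gradient of $u|_{\mathcal{M}_{\bz}}$ is the orthogonal projection of the ambient gradient onto this tangent space,
$$
\nabla_{\mathcal{M}_{\bz}} u(\bx) = \big( I_d - \Pi_{\mathrm{range}(\nabla g(\bx)^T)} \big) \nabla u(\bx) = \nabla u(\bx) - \Pi_{\mathrm{range}(\nabla g(\bx)^T)} \nabla u(\bx),
$$
so that $\| \nabla_{\mathcal{M}_{\bz}} u(\bx) \|^2 = \| \nabla u(\bx) - \Pi_{\mathrm{range}(\nabla g(\bx)^T)} \nabla u(\bx) \|^2$. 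This is the step I expect to require the most care, since it combines the differential-geometric description of the level sets (well posed precisely because of the rank condition) with the linear-algebra identity relating $\ker(\nabla g)$, $\mathrm{range}(\nabla g^T)$, and the orthogonal projector $\Pi_{\mathrm{range}(\nabla g(\bx)^T)}$; one must also be careful that the disintegration is valid and that the conditional laws are genuinely supported on these manifolds.

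Finally, I would bound $\mathbb{C}(\bX \mid g(\bX) = \bz) \leq \mathbb{C}(\bX \mid \mathcal{G}_m)$ uniformly in $\bz\in g(\calX)$ and $g\in\mathcal{G}_m$ via \eqref{eq:PoincareConstantConditional}, substitute the identity above, and integrate back over $\bz$ against $g_{\#}\piX$. Re-applying the law of total expectation collapses the iterated expectation to $\E\big[ \| \nabla u(\bX) - \Pi_{\mathrm{range}(\nabla g(\bX)^T)} \nabla u(\bX) \|^2 \big] = J(g)$, which yields $\E[(u(\bX)-f(g(\bX)))^2] \leq \mathbb{C}(\bX \mid \mathcal{G}_m)\, J(g)$, as claimed.
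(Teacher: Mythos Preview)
Your proposal is correct and follows essentially the same approach as the paper: choose $f(\bz)=\E[u(\bX)\mid g(\bX)=\bz]$, disintegrate along the level sets $g^{-1}(\bz)$ (which are $(d-m)$-dimensional manifolds by the rank condition), apply the Poincar\'e inequality on each level set to the restriction of $u$, identify the Riemannian gradient as $(I_d-\Pi_{\mathrm{range}(\nabla g(\bx)^T)})\nabla u(\bx)$ via $T_\bx(\mathcal{M}_\bz)=\ker(\nabla g(\bx))$, bound the constant by $\mathbb{C}(\bX\mid\mathcal{G}_m)$, and reassemble to obtain $J(g)$. The only cosmetic difference is that the paper defines $f$ at the end rather than the beginning.
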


\begin{proof}[Proof of \cref{prop:PoincareConditional}]
 Let $g\in\mathcal{G}_m$.
 Because $\mathrm{rank}(\nabla g(\bx)^T)=m$ for any $\bx\in\calX$, the level set $\mathcal{M}= g^{-1}({\bz})$ for some ${\bz}\in g(\calX)$ is a smooth manifold of dimension $d-m$; see Theorem 4.3.1 in \cite{krantz2012implicit}. Let $u_\mathcal{M}:\mathcal{M}\rightarrow\R$ be the restriction of $u$ to $\mathcal{M}$. 
 Together with \eqref{eq:PoincareConstantConditional}, the Poincar\'e inequality \eqref{eq:PoincareInequality} with $h=u_\mathcal{M}$ and $\bX_{\mathcal{M}}=(\bX|g(\bX)={\bz})$ permits writing
 \begin{align}
 \E[ ( u(\bX_{\mathcal{M}}) - \E[ u(\bX_{\mathcal{M}})] )^2 ]
 &=\E[ ( u_\mathcal{M}(\bX_{\mathcal{M}}) - \E[ u_\mathcal{M}(\bX_{\mathcal{M}})] )^2 ] \nonumber\\
  &\overset{\eqref{eq:PoincareInequality}\&\eqref{eq:PoincareConstantConditional}}{\leq} \mathbb{C}(\bX|\mathcal{G}_m) \, \E[ \| \nabla u_{\mathcal{M}}(\bX_{\mathcal{M}}) \|^2]. \label{eq:tmp352046}
 \end{align}
 Because $\mathcal{M}$ is a smooth manifold embedded in $\R^d$, the gradient $\nabla u_\mathcal{M}$ can be expressed by means of the gradient $\nabla u$ as follows
 \begin{equation}\label{eq:tmp134}
  \nabla u_\mathcal{M}(\bx) = \Pi_{T_{\bx}(\mathcal{M})} \nabla u(\bx) 
 \end{equation}
 for all $\bx\in\mathcal{M}$, where $\Pi_{T_{\bx}(\mathcal{M})} \in\R^{d\times d}$ is the orthogonal projector onto $T_{\bx}(\mathcal{M})$, the tangent space of $\mathcal{M}$ at $\bx$.
 Since $\mathcal{M}$ is a level set of $g$, we have $T_\bx(\mathcal{M}) = \mathrm{ker}(\nabla g(\bx)) = (\mathrm{range}(\nabla g(\bx)^T))^\perp$ (see for instance \cite[Section 3.5.7]{absil2009optimization}) so that 
 \begin{equation}\label{eq:tmp135}
  \Pi_{T_{\bx}(\mathcal{M})} = \Pi_{\mathrm{ker}(\nabla g(\bx))} = I_d - \Pi_{\mathrm{range}(\nabla g(\bx)^T)}.
 \end{equation}
 Combining \eqref{eq:tmp352046} with \eqref{eq:tmp134} and \eqref{eq:tmp135} we obtain
 \begin{equation}\label{eq:tmp4675}
  \E[ ( u(\bX_{\mathcal{M}}) - \E[ u(\bX_{\mathcal{M}})] )^2 ] \leq \mathbb{C}(\bX|\mathcal{G}_m) ~ \E[ \| (I_d - \Pi_{\mathrm{range}(\nabla g(\bX_{\mathcal{M}})^T)}) \nabla u(\bX_{\mathcal{M}}) \|^2] .
 \end{equation}
 Now, because $\bX_{\mathcal{M}}$ is the conditional random variable $\bX|g(\bX)={\bz}$, we can interpret any expectation $\E[ \phi( \bX_{\mathcal{M}} ) ]$ as a conditional expectation $\E[ \phi( \bX ) | g(\bX)={\bz} ] $ for any integrable function $\phi:\calX\rightarrow\R$. This manipulation permits rewriting the inequality \eqref{eq:tmp4675} as
 \begin{align*}
  \E[ (u(\bX)  -  &\E[u(\bX)|g(\bX)] )^2 \,|\, g(\bX)={\bz} ] \nonumber\\
  &\leq \mathbb{C}(\bX|\mathcal{G}_m)~\E \left[ \| (I_d - \Pi_{\mathrm{range}(\nabla g(\bX)^T)}) \nabla u(\bX) \|^2 \,\Big|\, g(\bX)={\bz} \right] \label{eq:tmp3520410}
 \end{align*}
 Replacing $\bz$ by the random variable $\bZ=g(\bX)$ and taking the expectation on both sides, we obtain
 $$
  \E\left[ (u(\bX)  -  \E[u(\bX)|g(\bX)] )^2 \right] 
  \leq  \mathbb{C}(\bX|\mathcal{G}_m)~\E\left[ \| (I_d - \Pi_{\mathrm{range}(\nabla g(\bX)^T)}) \nabla u(\bX) \|^2 \right].
 $$
 Finally we define the measurable function $f:\R^m\rightarrow\R$ such that $f(\bz)=\E[u(\bX)|g(\bX)=\bz]$ for any $\bz\in\R^m$. We can write $\E[u(\bX)|g(\bX)] = f(g(\bX))$ which yields \eqref{eq:PoincareConditional} and concludes the proof.
\end{proof}

Proposition \ref{prop:PoincareConditional} ensures that, for any $g\in\mathcal{G}_m$, there exists a function $f:\R^m\rightarrow\R$  such that the mean squared error between $u$ and $f\circ g$ is bounded by $\mathbb{C}(\bX|\mathcal{G}_m) J(g)$. This remarkable property justifies the use of the cost function $J$ for the construction of $g$. 

\begin{remark}[Linear feature maps and the Gaussian distribution]
 When $X\sim\mathcal{N}(0,I_d)$ is a standard Gaussian random vector and when $\mathcal{G}_m=\{\bx\mapsto U\bx : U\in\R^{m\times d}, UU^T=I_m \}$ contains linear features, the constant $\mathbb{C}(\bX|\mathcal{G}_m)$ is equal to 1.
 Indeed, the level sets $g^{-1}({\bz})$ are affine subspaces and any conditional random variable of the form $\bX|g(\bX)={\bz}$ is Gaussian with identity covariance. 
 Theorem 3.20 in \cite{boucheron2013concentration} ensures that $\mathbb{C}(\bX|g(\bX)={\bz})=1$ for any $g\in\mathcal{G}_m$ and ${\bz}\in g(\calX)$, which yields $\mathbb{C}(\bX|\mathcal{G}_m)=1$.
\end{remark}

We conclude this section with an important property of $J$.
Consider a $\mathcal{C}^1$-diffeomorphism $\phi:\R^m\rightarrow\R^m$. Since $\nabla\phi(\bx)\in\R^{m\times m}$ is invertible for all $\bx\in\calX$, it holds that $\text{range}(\nabla \phi\circ g(\bX)^T) =\text{range}(\nabla g(\bX)^T \nabla\phi(g(\bX))^T) = \text{range}(\nabla g(\bX)^T)$. Thus we have
\begin{equation}\label{eq:J_invariance}
 J(\phi\circ g) = J(g).
\end{equation}
This invariance reflects the following property of our initial dimension reduction problem \eqref{eq:ErrorLessThanEpsilon}: any composed function $f\circ g$ can be written as the composition of $f\circ\phi^{-1}$ with $\phi\circ g$ so that the feature maps $g$ and $\phi\circ g$ are equivalent with regard to the problem \eqref{eq:ErrorLessThanEpsilon}.
The invariance \eqref{eq:J_invariance} offers the possibility to arbitrarily impose the probability law of $g(\bX)$. Indeed, under natural assumptions on $g$, there exists a $\mathcal{C}^1$-diffeomorphism $\phi=\phi_g$ depending on $g$ so that $\phi_g\circ g(\bX)$ follows, for instance, the standard normal distribution $\mathcal{N}(0,I_d)$; see \cite{villani2008optimal}. 
Replacing $g$ by $\bar g=\phi_g\circ g$ yields the same value of $J(\bar g)=J(g)$ with $\bar g(\bX)\sim\mathcal{N}(0,I_d)$. However, constructing $\phi_g$ can be numerically expensive in practice.
A more pragmatic way to exploit \eqref{eq:J_invariance} is simply to consider the affine transformation $\phi_g({\bz})=\text{Cov}(g(\bX))^{-1/2}({\bz} - \E[g(\bX)])$, which ensures that $\phi_g\circ g(\bX)$ is centered with identity covariance. This affine map is readily computable and allows one to normalize the feature map $g$. 
In the following, we will consider the constrained minimization problem
\begin{equation}\label{eq:minJ_constrained}
 \min_{\substack{g\in\mathcal{G}_m \\ \mathbb{E}[g(\bX)] = 0\\ \mathrm{Cov}(g(\bX)) = I_d}} J(g).
\end{equation}
The constraints $\mathbb{E}[g(\bX)] = 0 $ and $ \mathrm{Cov}(g(\bX)) = I_d$ will be useful to stabilize the minimization algorithms, as described in the next section.

\section{Algorithms}\label{sec:algos}

Based on the previous section, an approximation $f\circ g$ of $u$ can be obtained by first minimizing $J(g)$ over some prescribed feature map space $\mathcal{G}_m$, and then by minimizing the mean squared error $\E[(u(\bX)-f\circ g (\bX))^2]$ over $f\in\mathcal{F}_m$. 
In this section we propose adaptive algorithms to construct a feature map space $\mathcal{G}_m$ of the form
\begin{equation}\label{eq:Gm_vectorSpace}
 \mathcal{G}_m = 
 \left\{ g:\bx\mapsto 
 \begin{pmatrix}
  g_1(\bx)\\\vdots\\g_m(\bx)
 \end{pmatrix}
 \text{ where }
 g_i\in\text{span}\{\Phi_1,\hdots,\Phi_K\}
 \right\}
\end{equation}
and a profile function space $\mathcal{F}_m$ of the form
\begin{equation}\label{eq:Fm_vectorSpace}
 \mathcal{F}_m = \text{span}\{\Psi_1,\hdots,\Psi_P\},
\end{equation}
where $\Phi_1,\hdots,\Phi_K$ and $\Psi_1,\hdots,\Psi_P$ are polynomials defined on $\R^d$ and $\R^m$, respectively.
In practice we make use of a sample $\{(\bx^{(i)},u(\bx^{(i)}),\nabla u(\bx^{(i)})\}_{i=1}^N$ of size $N$, which allows estimating $J(g)$ by
\begin{equation}\label{eq:Jhat}
 \widehat J(g) \coloneqq \frac{1}{N}\sum_{i=1}^N \| \nabla u(\bx^{(i)}) - \Pi_{\mathrm{range}(\nabla g(\bx^{(i)})^T)} \nabla u(\bx^{(i)}) \|^2,
\end{equation}
and the mean squared error $\E[(u(\bX)-f\circ g(\bX))^2]$ by
$\frac{1}{N}\sum_{i=1}^N(u(\bx^{(i)})-f\circ g (\bx^{(i)}))^2$.
First we present in Section \ref{sec:Rayleigh} an algorithm for the minimization of $\widehat J(g)$ over a given (fixed) space $\mathcal{G}_m$.
Then in Section \ref{sec:AdaptiveFeatureMaps} we propose a greedy procedure to enrich the space $\mathcal{G}_m$ adaptively. A similar procedure will be presented in Section \ref{sec:AdaptiveProfile} for the construction of the polynomial space $\mathcal{F}_m$. For those adaptive algorithms, a cross-validation error analysis determines when to stop the enrichment procedures, as described in Section \ref{sec:CrossValidation}.

\subsection{Maximizing the expectation of a Rayleigh quotient}\label{sec:Rayleigh}

Assume the basis $\{\Phi_1,\hdots,\Phi_K\}$ of the feature map space \eqref{eq:Gm_vectorSpace} is given, with $K \geq m$. 
We show that minimizing $J(g)$ (or $\widehat J(g)$) over $g\in\mathcal{G}_m$ boils down to the maximization of the expectation of a generalized Rayleigh quotient. We then propose a quasi-Newton algorithm to solve the problem.

With the notation $\Phi(\bx)=(\Phi_1(\bx) , \hdots ,\Phi_K(\bx))\in\R^{K}$, any feature map $g$ in the space $\mathcal{G}_m$ defined by \eqref{eq:Gm_vectorSpace} can be written as 
$$
 g(\bx)=G^T\Phi(\bx),
$$
for some matrix $G\in\R^{K\times m}$.
In order to account for the constraints $\mathbb{E}[g(\bX)] = 0$ and $ \mathrm{Cov}(g(\bX)) = I_d$ in \eqref{eq:minJ_constrained}, we assume that $\E[\Phi(\bX)]=0$ and we impose the constraint that $G$ satisfy
\begin{equation}\label{eq:G_constrains}
 G^T\mathrm{Cov}(\Phi(\bX))G = I_d.
\end{equation}
Assuming the Jacobian $\nabla g(\bX) = G^T \nabla\Phi(\bX)$ has rank $m$ almost surely, the orthogonal projector $\Pi_{\mathrm{range}(\nabla g(\bX)^T)}$ can be expressed as
$$
 \Pi_{\mathrm{range}(\nabla g(\bX)^T)} = \nabla g(\bX)^T \left(\nabla g(\bX) \nabla g(\bX)^T \right)^{-1} \nabla g(\bX),
$$
and the cost function $J(g)$ becomes
\begin{align}
  J(g)
  &=\E\left[ \big\| \nabla u(\bX) - \Pi_{\mathrm{range}(\nabla g(\bX)^T)} \nabla u(\bX) \big\|^2 \right] \nonumber\\
  &= \E\left[ \| \nabla u(\bX) \|^2 \right] - \E\left[ \big\| \Pi_{\mathrm{range}(\nabla g(\bX)^T)} \nabla u(\bX)  \big\|^2\right] \nonumber\\
  &=\E\left[ \| \nabla u(\bX) \|^2\right]- \E\left[ \nabla u(\bX)^T \nabla g(\bX)^T \left(\nabla g(\bX) \nabla g(\bX)^T \right)^{-1} \nabla g(\bX) \nabla u(\bX)  \right] \nonumber\\
  &= \E\left[ \| \nabla u(\bX) \|^2\right]- \E\left[ \trace \big(G^T A(\bX) G \big)\big(G^TB(\bX)G\big)^{-1} \right]. \label{eq:J_expectedRayleighQuotient}\nonumber
\end{align}
Here, $A(\bX)\in\R^{K\times K}$ and $B(\bX)\in\R^{K\times K}$ are two symmetric positive semidefinite matrices given by
\begin{align*}
 A(\bX) &= \nabla \Phi(\bX) \nabla u(\bX) \nabla u(\bX)^T \nabla \Phi(\bX)^T ,\\
 B(\bX) &= \nabla \Phi(\bX) \nabla \Phi(\bX)^T.
\end{align*}
Minimizing $g\mapsto J(g)$ over $\mathcal{G}_m$ is the same as maximizing
\begin{equation}\label{eq:R}
 \mathcal{R}(G) = \E\left[ \trace \left(\big(G^T A(\bX) G \big)\big(G^TB(\bX)G\big)^{-1} \right)\right],
\end{equation}
over $G\in\R^{K\times m}$. 
Similarily, minimizing $g\mapsto \widehat J(g)$ over $\mathcal{G}_m$ is the same as maximizing
\begin{equation}\label{eq:Rhat}
 \widehat{\mathcal{R}}(G) = \frac{1}{N}\sum_{i=1}^N \trace \left(\big(G^T A(\bX^{(i)}) G \big)\big(G^TB(\bX^{(i)})G\big)^{-1} \right),
\end{equation}
over $G\in\R^{K\times m}$. 
The quantity $\mathcal{R}(G)$ corresponds to the expectation of the generalized Rayleigh quotient associated with the matrix pair $(A(\bX),B(\bX))$, and $\widehat{\mathcal{R}}(G)$ to its Monte Carlo estimate. It is easier to recognize the generalized Rayleigh quotient when $m=1$, since $G\in\R^K$ becomes a vector so that $\mathcal{R}(G)=\E[\frac{G^T A(\bX) G}{G^T B(\bX) G}]$ and $\widehat{\mathcal{R}}(G)=\frac{1}{N}\sum_{i=1}^N \frac{G^T A(\bx^{(i)}) G}{G^T B(\bx^{(i)}) G}$.
Generalized Rayleigh quotients are ubiquitous in dimension reduction; see \cite{kokiopoulou2011trace}. However, the \emph{expectations} or \emph{sums} of generalized Rayleigh quotients as in \eqref{eq:R} and \eqref{eq:Rhat} are not common and appear to be much more difficult to maximize.
As shown in \cite{wang2018efficient,zhang2013optimizing,zhang2014self}, maximizing the sum of two generalized Rayleigh quotients is already a difficult task, which requires dedicated algorithms.
In the particular case where the feature map is linear, however, maximizing $\mathcal{R}(G)$ can be done analytically, as shown by the next remark.

\begin{remark}[Linear feature maps and active subspaces]\label{rmk:AS}
 The space of linear feature maps $\mathcal{G}_m = \{\bx\mapsto G^T\bx : G\in\R^{d\times m}\} $ corresponds to \eqref{eq:Gm_vectorSpace} with $\Phi(\bx)=\bx$, the identity map. In this case $\nabla\Phi(\bx)=I_d$ is independent of $\bx$ so that $A(\bX)=\nabla u(\bX)\nabla u(\bX)^T$ and $B(\bX)=I_d$. The expected generalized Rayleigh quotient \eqref{eq:R} becomes the standard (matrix) Rayleigh quotient $\mathcal{R}(G) = \trace((G^T H G)(G^T G)^{-1})$ where 
 $$
  H = \E[\nabla u(\bX)\nabla u(\bX)^T].
 $$
 The maximum of $G\mapsto\mathcal{R}(G)$ is known to be attained by any matrix $G\in\R^{K\times m}$ whose columns span the $m$-dimensional dominant eigenspace of $H$.
 This subspace is sometimes called the active subspace; see \cite{constantine2015,Constantine2014a,zahm2020gradient}.
 When considering the sample approximation $\widehat{\mathcal{R}}(G)$ in \eqref{eq:Rhat}, the matrix $H$ is simply replaced by its approximation $\widehat H = \frac{1}{N}\sum_{i=1}^N \nabla u(\bx^{(i)})\nabla u(\bx^{(i)})^T$.
 The accuracy of the active subspace recovery from $\widehat H$ depends on the sample size $N$, on the active subspace dimension $m$, and on the spectrum of $H$; see \cite{lam2020multifidelity} for more details.
\end{remark}

So far we have seen that, provided the basis $\Phi(\bx)=(\Phi_1(\bx) , \hdots ,\Phi_K(\bx))$ satisfies $\E[\Phi(\bX)]=0$, the minimization problem \eqref{eq:minJ_constrained} can be rewritten as
\begin{equation}\label{eq:maxR}
 \min_{\substack{g\in\mathcal{G}_m \\ \mathbb{E}[g(\bX)] = 0\\ \mathrm{Cov}(g(\bX)) = I_d}} J(g)
 \qquad \overset{g(\bx)=G^T\Phi(\bx)}{\Longleftrightarrow} \qquad 
 \max_{\substack{G\in\R^{K\times m}  \\ G^T\mathrm{Cov}(\Phi(\bX))G = I_d}} \mathcal{R}(G).
\end{equation}
Next we propose a quasi-Newton method to solve this problem. The following proposition gives the expression for the gradient of $G\mapsto\mathcal{R}(G)$. The proof is given in Appendix \ref{proof:gradR}.

\begin{proposition}\label{prop:gradR}
 Let $A(\bX),B(\bX) \in\R^{K\times K}$ be two random symmetric positive semidefinite matrices.
 Assume that for a given $G\in\R^{K\times m}$, there exists $\varepsilon>0$ such that $(G+\delta G)^T B(\bX)(G+\delta G)$ is almost surely invertible for any $\|\delta G\|\leq\varepsilon$.
 Then $\mathcal{R}(\cdot)$ defined by \eqref{eq:R} is differentiable at $G$ and its gradient $\nabla\mathcal{R}(G)\in\R^{K\times m}$ is such that $(\nabla\mathcal{R}(G))_{ij} = \frac{\partial\mathcal{R}(G)}{\partial G_{ij}}$ can be written as
 \begin{align}
  \nabla\mathcal{R}(G) &= 2 \left(\big(  H(G) - \Sigma(G)\big)G_\mathrm{vec} \right)_\mathrm{mat}, \label{eq:gradR}
 \end{align}
 where $H(G)$ and $\Sigma(G)$ are two symmetric positive semidefinite matrices in $\R^{(Km)\times (Km)}$ given by
 \begin{align}
  H(G) &= \E\left[ \left((G^T B(\bX)G )^{-1}\right)  \otimes A(\bX) \right] \label{eq:H}\\
  \Sigma(G) &= \E\left[ \left((G^T B(\bX)G )^{-1} G^T A(\bX)G(G^T B(\bX)G )^{-1}  \right) \otimes B(\bX) \right] \label{eq:Sigma}.
 \end{align}
 Here, the notation $(\cdot)_\mathrm{vec}$ denotes the vectorization of a matrix, such that $G_\mathrm{vec}\in\R^{Km}$ is the vertical concatenation of the columns of $G\in\R^{K\times m}$. The matricization $(\cdot)_\mathrm{mat}$ is the reverse operation, such that $(G_\mathrm{vec})_\mathrm{mat}=G$. 
 The notation $\otimes$ denotes the Kronecker product.
\end{proposition}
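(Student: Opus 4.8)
The plan is to differentiate $\mathcal{R}(G) = \E[\trace((G^T A(\bX) G)(G^T B(\bX) G)^{-1})]$ entry by entry, treating the expectation as a fixed averaging operation and differentiating the integrand pointwise in $\bX$ under the integral sign. The invertibility hypothesis on $(G+\delta G)^T B(\bX)(G+\delta G)$ for small $\|\delta G\|$ guarantees that the integrand is well-defined and smooth in a neighborhood of $G$, and a standard dominated-convergence argument (which I would only mention, not belabor) licenses exchanging $\partial/\partial G_{ij}$ with $\E[\cdot]$. So the real work reduces to a deterministic matrix-calculus computation: fix symmetric PSD matrices $A, B \in \R^{K\times K}$ with $G^T B G$ invertible, set $\varphi(G) = \trace\big((G^T A G)(G^T B G)^{-1}\big)$, and compute $\nabla \varphi(G)$.

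To carry out that computation I would first recall the two basic identities: for the product rule, $\d(G^T A G) = \d G^T A G + G^T A\, \d G$; and for the inverse, $\d\big((G^T B G)^{-1}\big) = -(G^T B G)^{-1}\,\d(G^T B G)\,(G^T B G)^{-1}$. Writing $C = G^T B G$ for brevity and applying the product rule to $\trace((G^T A G) C^{-1})$, the differential splits into a term coming from varying the $A$-block, $\trace\big((\d G^T A G + G^T A\,\d G) C^{-1}\big)$, and a term from varying $C^{-1}$, namely $-\trace\big((G^T A G) C^{-1} (\d G^T B G + G^T B\,\d G) C^{-1}\big)$. Using the cyclic property of the trace and the symmetry of $A$, $B$, $C$, each of these collapses into the form $2\,\trace\big((\d G)^T\, (\cdots) \big)$, from which one reads off the gradient as the matrix in the parentheses. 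Concretely the $A$-term contributes $2 A G C^{-1}$ and the $C^{-1}$-term contributes $-2 B G C^{-1} (G^T A G) C^{-1}$, so pointwise $\nabla \varphi(G) = 2\big(A(\bX) G\, C(\bX)^{-1} - B(\bX) G\, C(\bX)^{-1} G^T A(\bX) G\, C(\bX)^{-1}\big)$ with $C(\bX) = G^T B(\bX) G$.

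The remaining step is purely notational: rewrite each of the two pointwise terms, after taking the expectation, using the Kronecker-product/vectorization identity $(\mathrm{PYQ})_\mathrm{vec} = (Q^T \otimes P)\, Y_\mathrm{vec}$. Applying this with $Y = G$ gives $(A(\bX) G\, C(\bX)^{-1})_\mathrm{vec} = (C(\bX)^{-1} \otimes A(\bX))\, G_\mathrm{vec}$ (using that $C(\bX)^{-1}$ is symmetric), and similarly $(B(\bX) G\, C(\bX)^{-1} G^T A(\bX) G\, C(\bX)^{-1})_\mathrm{vec} = \big((C(\bX)^{-1} G^T A(\bX) G\, C(\bX)^{-1}) \otimes B(\bX)\big)\, G_\mathrm{vec}$. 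Pulling the expectation inside then identifies the two coefficient matrices with $H(G)$ and $\Sigma(G)$ as defined in \eqref{eq:H} and \eqref{eq:Sigma}, and matricizing yields \eqref{eq:gradR}. Symmetry and positive semidefiniteness of $H(G)$ and $\Sigma(G)$ follow because each is an expectation of a Kronecker product of two symmetric PSD matrices, and the Kronecker product of symmetric PSD matrices is symmetric PSD.

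The main obstacle is bookkeeping rather than conceptual difficulty: one must be careful about the transpose conventions in the Kronecker identity, about which factor becomes the ``left'' and which the ``right'' in $(\mathrm{PYQ})_\mathrm{vec}$, and about correctly symmetrizing the differential so that the coefficient of $(\d G)^T$ (as opposed to $\d G$) is the true gradient. I would also take minor care to justify the interchange of differentiation and expectation — the hypothesis that $(G+\delta G)^T B(\bX) (G+\delta G)$ is a.s. invertible for $\|\delta G\| \le \varepsilon$, combined with continuity of the matrix inverse on its domain, gives a local bound on the integrand and its derivative uniform in a neighborhood of $G$, which is all dominated convergence needs.
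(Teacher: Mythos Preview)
Your proposal is correct and follows essentially the same route as the paper's proof: both compute the first-order variation of $\trace\big((G^T A G)(G^T B G)^{-1}\big)$ via the product rule and the inverse-differentiation identity, arrive at the same pointwise gradient $2\big(A G C^{-1} - B G C^{-1} G^T A G C^{-1}\big)$, and then rewrite it in Kronecker form using the identity $(S_1 G S_2)_{\mathrm{vec}} = (S_2 \otimes S_1) G_{\mathrm{vec}}$ for symmetric $S_1, S_2$. The only cosmetic difference is that the paper works with an explicit perturbation $G \to G + \delta G$ and tracks the $\mathcal{O}(\|\delta G\|^2)$ remainder, while you use differential notation $\d G$; your brief mention of dominated convergence for the interchange of $\partial/\partial G_{ij}$ and $\E[\cdot]$ is a point the paper leaves implicit.
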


Starting at an initial guess $G^{(0)}\in\R^{K\times m}$, a quasi-Newton method for maximizing $G\mapsto\mathcal{R}(G)$ is an iterative procedure $G^{(k+1)} = G^{(k)} - (\mathcal{H}^{(k)})^{-1}\nabla\mathcal{R}(G^{(k)})$ where $\mathcal{H}^{(k)}:\R^{K\times m}\rightarrow\R^{K\times m}$ is an approximation to the Hessian of $\mathcal{R}(\cdot)$ at point $G^{(k)}$; see \cite{dennis1977quasi}.
Because our goal is to maximize $\mathcal{R}(\cdot)$, the operator $\mathcal{H}^{(k)}$ should be chosen symmetric negative definite.
We propose to use $\mathcal{H}^{(k)} = -2\Sigma(G^{(k)})$. 
This matrix naturally appears in the expression of the Hessian $\nabla^2\mathcal{R}(G^{(k)})$ when differentiating the relation \eqref{eq:gradR}.
Assuming $\Sigma(G^{(k)})$ is invertible (we observe in practice that it is non-singular) the quasi-Newton iteration in vectorized form is
\begin{align}
 G_\mathrm{vec}^{(k+1)} 
 &= G_\mathrm{vec}^{(k)} - \left( \big( \mathcal{H}^{(k)} \big)^{-1} \nabla\mathcal{R}(G^{(k)}) \right)_\mathrm{vec} \nonumber\\
 &\overset{\eqref{eq:gradR}}{=} G_\mathrm{vec}^{(k)} - \Big(-2\Sigma(G^{(k)}) \Big)^{-1}\Big( 2H(G^{(k)}) - 2\Sigma(G^{(k)})\Big)G_\mathrm{vec}^{(k)} \nonumber\\
 &= \Sigma(G^{(k)})^{-1}H(G^{(k)})G_\mathrm{vec}^{(k)}. \label{eq:QuasiNewton_first}
\end{align}
To account for the constraint $G^T \text{Cov}(\Phi(\bX)) G = I_m$ in \eqref{eq:maxR}, notice that, by the definition \eqref{eq:R} of $\mathcal{R}(\cdot)$, we have $\mathcal{R}(GM)=\mathcal{R}(G)$ for any invertible matrix $M\in\R^{m\times m}$. 
By letting $M=(G^T\text{Cov}(\Phi(\bX))G)^{-1/2}$, the matrix $\widetilde G = GM$ satisfies the constraint ${\widetilde G}^T \text{Cov}(\Phi(\bX)) {\widetilde G} = I_m$ and yields the same Rayleigh quotient $\mathcal{R}(\widetilde G)=\mathcal{R}(G)$. 
Following this reasoning, we modify the iterations \eqref{eq:QuasiNewton_first} by adding a normalization step:
\begin{align}
 G^{(k+1/2)} &= \left( \Sigma(G^{(k)})^{-1}H(G^{(k)})G_\mathrm{vec}^{(k)}  \right)_\mathrm{mat} ,\label{eq:QuasiNewton}\\
 G^{(k+1)} &= G^{(k+1/2)} \left( G^{(k+1/2)T} \text{Cov}(\Phi(\bX))G^{(k+1/2)} \right)^{-1/2} .\label{eq:QuasiNewtonNormalization}
\end{align}
Interestingly, this quasi-Newton procedure is very similar to a power iteration for solving eigenvalue problems; see the next remark.

\begin{remark}[Quasi-Newton method and power iteration]\label{rmk:PowerIteration}
 Let us continue Remark \ref{rmk:AS}, where $\mathcal{G}_m$ is the space of linear feature maps. Recall that $\Phi(\bx)=\bx$, $A(\bX)=\nabla u(\bX)\nabla u(\bX)^T$, $B(\bX) = I_d$, and assume for simplicity that $\mathrm{Cov}(\Phi(\bX)) = I_d$. Given an iterate $G^{(k)}$ such that  $G^{(k)}G^{(k)T}=I_d$, the matrices $H(G^{(k)})$ and $\Sigma(G^{(k)})$ introduced in \eqref{eq:H} and \eqref{eq:Sigma} become
 $H(G^{(k)}) = I_d\otimes H $ and $\Sigma(G^{(k)}) =( G^{(k)T} H G^{(k)} ) \otimes I_d$,
 where $H=\E[\nabla u(\bX)\nabla u(\bX)^T]$.
 Using the relation $((S_2\otimes S_1)G_\mathrm{vec})_\mathrm{mat} = S_1 G S_2$ for any symmetric matrices $S_1,S_2$, the quasi-Newton iteration \eqref{eq:QuasiNewton} becomes
 \begin{equation}\label{eq:tmp3467865}
  G^{(k+1/2)} = \left(\left( \left( G^{(k)T} H G^{(k)} \right)^{-1} \otimes H\right) G_\mathrm{vec}^{(k)} \right)_\mathrm{mat}= H G^{k} \left( G^{(k)T} H G^{(k)} \right)^{-1}.
 \end{equation}
 Thus, the relation
 $$
  \text{range}(G^{(k+1)}) 
  \overset{\eqref{eq:QuasiNewtonNormalization}}{=} \text{range}(G^{(k+1/2)}) 
  \overset{\eqref{eq:tmp3467865}}{=} \text{range}(HG^{(k)}) 
  = \text{range}(H^{k+1} G^{(0)}) 
 $$
 holds and shows that the quasi-Newton iteration \eqref{eq:QuasiNewton} with the normalization step \eqref{eq:QuasiNewtonNormalization} is precisely a power iteration method which aims to compute the $m$-dimensional dominant eigenspace of the matrix $H$.
\end{remark}

In practice, the quasi-Newton method \eqref{eq:QuasiNewton} and \eqref{eq:QuasiNewtonNormalization} can be used to maximize $\widehat{\mathcal{R}}(G)$ \eqref{eq:Rhat} by replacing $H(G)$ and $\Sigma(G)$ with their sample approximations:
\begin{align*}
  \widehat H(G) &= \frac{1}{N}\sum_{i=1}^N \left((G^T B(\bx^{(i)})G )^{-1}\right)  \otimes A(\bx^{(i)}) \\
  \widehat \Sigma(G) &= \frac{1}{N}\sum_{i=1}^N \left((G^T B(\bx^{(i)})G )^{-1} G^T A(\bx^{(i)})G(G^T B(\bx^{(i)})G )^{-1}  \right) \otimes B(\bx^{(i)}) .
\end{align*}
The procedure is summarized in Algorithm \ref{alg:QuasiNewton}.
In the next section, we propose a relevant choice for the initialization $G^0$ of Algorithm \ref{alg:QuasiNewton}.
We emphasize that assembling these $Km$-by-$Km$ matrices would require the storage of $K^2m^2$ scalars, which is obviously not affordable when $K$ (and $m$) are large. In  practice, we never assemble these matrices explicitly.
Using the formulas
\begin{align}
 \widehat H(G)x &= \left(\frac{1}{N}\sum_{i=1}^N A(\bx^{(i)}) x_\mathrm{mat} (G^T B(\bx^{(i)})G )^{-1} \right)_\mathrm{vec} \label{eq:Hhat}\\
 \widehat \Sigma(G)x &= \left(\frac{1}{N}\sum_{i=1}^N  B(\bx^{(i)}) x_\mathrm{mat} (G^T B(\bx^{(i)})G )^{-1} G^T A(\bx^{(i)})G(G^T B(\bx^{(i)})G )^{-1} \right)_\mathrm{vec} \label{eq:Sigmahat},
\end{align}
the matrix-vector products $x\mapsto H(G)x$ and $x\mapsto \Sigma(G)x$ are computationally tractable.
In this sense, the matrices $H(G)$ and $\Sigma(G)$ are \emph{implicit} matrices.
For the calculation of $x\mapsto\widehat\Sigma(G)^{-1}x$, as required in \eqref{eq:QuasiNewton}, iterative solvers are well suited because they rely only on matrix-vector products; see \cite{golub2013matrix}. Here we use a conjugate gradient solver preconditioned with the diagonal matrix containing the diagonal of $\widehat\Sigma(G)$.

\begin{algorithm2e}[h]
\caption{Quasi-Newton method to maximize $G\mapsto\widehat{\mathcal{R}}(G)$.\label{alg:QuasiNewton}}
\SetKwInput{KwInput}{Input}                
\SetKwInput{KwOutput}{Output}              
\SetKwInput{KwRequire}{Require}
\DontPrintSemicolon
  \KwRequire{Computing the matrix-vector products $x\mapsto \widehat H(G)x$ and $x\mapsto \widehat \Sigma(G)x$ as in \eqref{eq:Hhat} and \eqref{eq:Sigmahat}.}
  \KwData{Training sample}
  \KwInput{Feature map space $\mathcal{G}_m$, initial guess $G^{(0)}\in\R^{K\times m}$, tolerance $\varepsilon>0$, max iteration $K_{\max}$}
  ~\\
  
    Initialize $k= 0$ and $\text{stepsize}=\varepsilon+1$
   
   \While{ $k< K_{\max}$ \rm{and} $\mathrm{stepsize} \geq\varepsilon$ }{
      Compute $b = \widehat H(G^{(k)})G^{(k)}_\mathrm{vec}\in\R^{Km}$ \;
      Solve $\widehat\Sigma(G^{(k)}) x = b$ using preconditioned conjugate gradient \;
      Matricize $x_\mathrm{mat}=(x)_\mathrm{mat}\in\R^{K\times m}$ and update $G^{(k+1/2)} = G^{(k)} - x_\mathrm{mat}$ \;
      Normalize $G^{(k+1)} = G^{(k+1/2)} M^{-1/2}$ with 
      $$M=G^{(k+1/2)T}\text{Cov}(\Phi(\bX))G^{(k+1/2)}\in\R^{m\times m}$$
      Update $k\gets k+1$ and $\text{stepsize} \gets \|x\|$
     }
  \KwOutput{final iterate $G^{(k)}$}
\end{algorithm2e}

\subsection{Adaptive polynomial feature map space}\label{sec:AdaptiveFeatureMaps}

In the previous section we proposed an algorithm for minimizing $g\mapsto \widehat J(g)$ over a given feature map space $\mathcal{G}_m$, as in \eqref{eq:Gm_vectorSpace}.
In this section, we borrow ideas from \cite{beck2013sparsity,migliorati2015adaptive,cohen2018multivariate} to construct $\mathcal{G}_m$ adaptively using multivariate polynomials.

We assume that the probability density function $\piX$ of $\bX$ is a product density $\piX(\bx) = \piX_1(\bx_1)\hdots\piX_d(\bx_d)$. 
For any $1\leq\nu\leq d$ we denote by $\{\Phi_{0}^\nu,\Phi_{1}^\nu,\hdots\}$ an orthonormal polynomial basis, with the degree of $\Phi_i^{\nu}$ equal to $i$, such that
$$
 \int \Phi_{i}^\nu(x)\Phi_{j}^\nu(x) \piX_\nu(x)\d x=\delta_{ij},
$$
holds for any $i,j\geq0$. For any multi-index $\alpha=(\alpha_1,\hdots,\alpha_d)\in\mathbb{N}^d$, we define the multivariate polynomial $\Phi_\alpha$ as
$$
 \Phi_\alpha(\bx)=  \prod_{\nu=1}^d \Phi_{\alpha_\nu}^\nu(\bx_\nu),
$$
and, for a given multi-index set $\Lambda_K\subseteq\mathbb{N}^d$ of cardinality $\#\Lambda_K = K$, we introduce 
\begin{equation}\label{eq:Gm_vectorSpace_LambdaK}
 \mathcal{G}_m^{\Lambda_K} = 
 \left\{ 
 \bx\mapsto 
 \begin{pmatrix}
  g_1(\bx)\\\vdots\\g_m(\bx)
 \end{pmatrix},~
 g_i\in\text{span}\{\Phi_\alpha;\alpha\in\Lambda_K\}
 \right\} .
\end{equation}
This feature map space parametrized by $\Lambda_K$ is, up to a change of notation, of the form of $\mathcal{G}_m$ in \eqref{eq:Gm_vectorSpace}.
The optimal multi-index set $\Lambda_K$ is that which minimizes the minimum of $J(g)$ over $g\in\mathcal{G}_m^{\Lambda_K}$, meaning
\begin{equation}\label{eq:LambdaK_optimal}
 \argmin_{\substack{\Lambda_K \subseteq \mathbb{N}^d \\ \#\Lambda_K=K}}
 \min_{g\in \mathcal{G}_m^{\Lambda_K} }  \widehat J(g).
\end{equation}
This best $K$-term approximation problem is combinatorial and not tractable in practice.
We propose a suboptimal solution to \eqref{eq:LambdaK_optimal} using a greedy procedure of the form
$$
 \Lambda_{K+1} = \Lambda_K \cup \{\alpha_{K+1}\},
$$
where $\alpha_{K+1}\in\mathbb{N}^d$ is a multi-index to determine.
Suppose we are given $\Lambda_K$ and that the corresponding optimal feature map
$$
 g_{\Lambda_K} \in\underset{g\in\mathcal{G}_m^{\Lambda_K}}{\text{argmin }} \widehat J(g)
$$
has been computed (for instance using Algorithm \ref{alg:QuasiNewton}). 
The optimal multi-index $\alpha_{K+1}$ to add would be the one which minimizes $\alpha\mapsto \widehat J(g_{\Lambda_K \cup \{\alpha\}})$. This would require the computation of $g_{\Lambda_K \cup \{\alpha\}}$ for many $\alpha\in\mathbb{N}^d$, which is not affordable in practice.
Instead we choose the multi-index $\alpha_{K+1}$ as the one which yields the steepest gradient of the function $v\mapsto \widehat J(g_{\Lambda_K}+v \Phi_\alpha)$ around $v=0$, meaning
\begin{equation}\label{eq:alphaKplus1}
 \alpha_{K+1} \in \underset{\alpha\in\mathbb{N}^d}{\text{arg\,max }}  \left\|\nabla_v ~\widehat J(g_{\Lambda_K}+v \Phi_\alpha)\big|_{v=0} \right\|.
\end{equation}
The rationale behind \eqref{eq:alphaKplus1} is to select the polynomial $\Phi_\alpha$ which, once added to the feature map space $\mathcal{G}_m$, yields the best immediate improvement of $\widehat J(\cdot)$ when moving away from $g_{\Lambda_K}$ in the direction $\Phi_\alpha$.

Maximization over the entire $\mathbb{N}^d$ as in \eqref{eq:alphaKplus1} is not feasible in practice. A standard workaround is to search for the maximum over an arbitrary subset of $\mathbb{N}^d$ with finite cardinality. The subset $\{\alpha\in\mathbb{N}^d, \sum_{i=1}^d \alpha_i \leq p\}$ is commonly used, as it corresponds to the polynomials $\Phi_\alpha$ with total degree bounded by $p$. However the cardinality of this subset is ${d+p \choose d}=\frac{(d+p)!}{p!d!}$ which can still be very large.
Borrowing ideas from \cite{migliorati2015adaptive,migliorati2019adaptive}, we propose an alternative strategy which relies on the notion of downward-closed sets; see \cite{chkifa2015breaking,cohen2018multivariate}. 
We assume that the set $\Lambda_K$ is downward-closed, meaning that
\begin{equation}\label{eq:downwardClosedLambdaK}
 \alpha \in \Lambda_K \text{ and } \alpha'\leq\alpha ~\Rightarrow~ \alpha' \in \Lambda_K,
\end{equation}
where $\alpha'\leq\alpha $ means $\alpha_i'\leq\alpha_i$ for all $1\leq i \leq d$.
Intuitively, \eqref{eq:downwardClosedLambdaK} means that $\Lambda_K$ has a pyramidal shape that contains no hole.
We denote by $\mathcal{M}(\Lambda_K)$ the \emph{reduced margin} of $\Lambda_K$, defined by
$$
 \mathcal{M}(\Lambda_K)
 = \{ \alpha\in\mathbb{N}^d\backslash\Lambda_K \text{ such that }  \alpha-e_i\in\Lambda_K \text{ for all } 1\leq i \leq d \text{ with } \alpha_i\neq0 \}
$$
where $e_i$ denotes the $i$-th canonical vector of $\mathbb{N}^d$.
By construction, any set of the form $\Lambda_K\cup\{\alpha\}$ with $\alpha\in\mathcal{M}(\Lambda_K)$ remains downward closed, which is the fundamental property of the reduced margin. By searching for the new multi-index in the reduced margin of $\Lambda_K$, as in
$$
 \alpha_{K+1} \in \underset{\alpha\in \mathcal{M}(\Lambda_K)}{\text{argmax }}   \left\|\nabla_v ~\widehat J(g_{\Lambda_K}+v \Phi_\alpha)\big|_{v=0} \right\|,
$$
we ensure that $\Lambda_{K+1}$ remains downward closed. This is illustrated on Figure \ref{fig:GreedyOnReducedMargin}.

\begin{figure}[t]
    \centering
    \begin{subfigure}[t]{0.5\textwidth}
        \centering
        \includegraphics[width=0.5\textwidth]{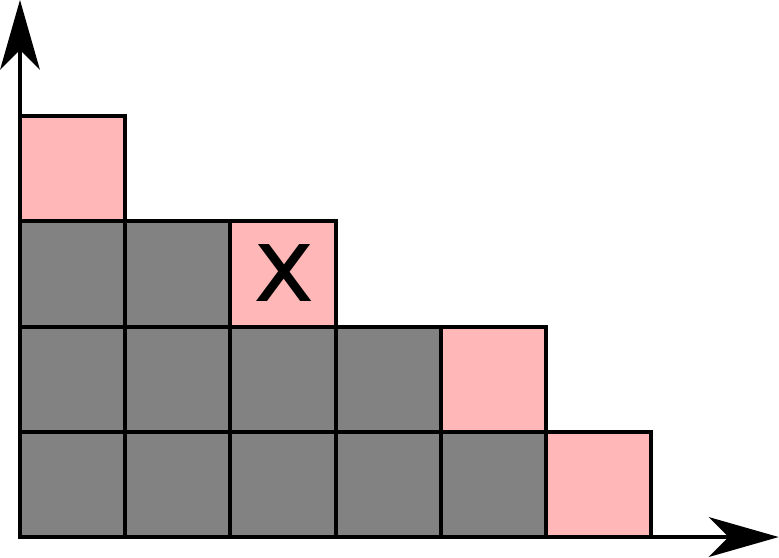}
        \caption{$\Lambda_K$, $\mathcal{M}(\Lambda_K)$ and $\alpha_{K+1}$.}
    \end{subfigure}%
    ~ 
    \begin{subfigure}[t]{0.5\textwidth}
        \centering
        \includegraphics[width=0.5\textwidth]{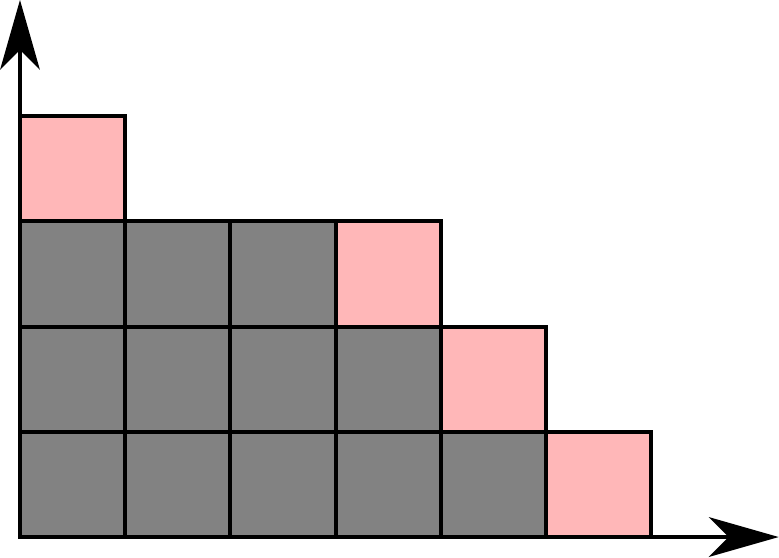}
        \caption{$\Lambda_{K+1}$ and $\mathcal{M}(\Lambda_{K+1})$.}
    \end{subfigure}
    \caption{Greedy construction of the downward closed set $\Lambda_K\subseteq\mathbb{N}^d$ with $d=2$. 
    Adding $\alpha_{K+1}$ (the cross on the left) to $\Lambda_K$ (gray boxes on the left) yields $\Lambda_{K+1}$ and the new reduced margin $\mathcal{M}(\Lambda_{K+1})$ (right plot).}
    \label{fig:GreedyOnReducedMargin}
\end{figure}

As pointed out in \cite{migliorati2015adaptive,cohen2018multivariate} in the context of least-squares regression, adding multiple multi-indices at each greedy iteration could yield better performance compared to adding only one multi-index at a time. Instead of the enrichment $\Lambda_{K+1} = \Lambda_K \cup \{\alpha_{K+1}\}$, we consider the so-called \emph{bulk chasing} procedure
$$
 \Lambda_{K+1} = \Lambda_K \cup \lambda_{K+1},
$$
where $\lambda_{K+1}\subseteq\mathcal{M}(\Lambda_K)$ is the smallest set of multi-indices such that
\begin{equation}\label{eq:lambdaKplus1_Margin}
 \left(\sum_{\alpha\in\lambda_{K+1}}
 \left\|\nabla_v ~\widehat J(g_{\Lambda_K}+v \Phi_\alpha)\big|_{v=0} \right\|^2  \right)
 \geq \theta \left(\sum_{\alpha\in\mathcal{M}(\Lambda_K)}
 \left\|\nabla_v ~\widehat J(g_{\Lambda_K}+v \Phi_\alpha)\big|_{v=0} \right\|^2  \right),
\end{equation}
for some parameter $0<\theta\leq1$. That is, $\lambda_{K+1}$ contains the $\#\lambda_{K+1}$ largest values of $\|\nabla_v ~\widehat J(g_{\Lambda_K}+v \Phi_\alpha)\big|_{v=0} \|$ which capture a prescribed fraction $\theta$ of the norm of the gradient of $J$ on the reduced margin. With the bulk chasing procedure we have $\#\Lambda_{K}\neq K$ in general.

This procedure is summarized in Algorithm \ref{alg:GreedyFeatureMaps}. We choose to start the algorithm with the set $\Lambda_{K}=\Lambda_{d}=\{\alpha\in\mathbb{N}^d:\sum_{i=1}^d \alpha_i=1\}$. This corresponds to the space of linear feature maps and, as explained in Remark \ref{rmk:PowerIteration}, Algorithm \ref{alg:QuasiNewton} boils down to a power iteration for which a random initialization works well.
Later, we initialize Algorithm \ref{alg:QuasiNewton} by adding a row of zeros to $G_{\Lambda_K}$ to account for the newly added basis terms. 
Notice that Algorithm \ref{alg:GreedyFeatureMaps} stops after $K_{\max}$ iterations. We will explain in Section \ref{sec:CrossValidation} how to use cross validation to determine $K_{\max}$.

\begin{algorithm2e}
  \caption{Construction of feature map $g$ on a downward-closed polynomial space\label{alg:GreedyFeatureMaps}}
  \DontPrintSemicolon
  \SetKwInput{KwInput}{Input}                
  \SetKwInput{KwOutput}{Output}              
  
  \KwData{Training sample}
  \KwInput{Intermediate dimension $m$, max iteration $K_{\max}$, parameter $\theta$}
  
  Initialize $K=d$ and $\Lambda_{K}=\{\alpha\in\mathbb{N}^d:\sum_{i=1}^d \alpha_i=1\}$ \;
  Compute $G_{\Lambda_K}\in\R^{d\times m}$ using Algorithm \ref{alg:QuasiNewton} with random initialization. \;
  Define $g_{\Lambda_K}(\bx)=G_{\Lambda_K}^T\bx$  \;
  \For{$K=d,\hdots,K_{\max}-1$}{
  Compute $ \|\nabla_v \widehat J(g_{\Lambda_K}+v \Phi_\alpha)|_{v=0} \|$ for all $\alpha\in\mathcal{M}(\Lambda_{K})$ \;
     Select $\lambda_{K+1}$ as in \eqref{eq:lambdaKplus1_Margin}\;
     Update $\Lambda_{K+1} = \Lambda_{K} \cup \lambda_{K+1}$ and $\mathcal{G}_m^{\Lambda_{K+1}}$\;
     Compute $G_{\Lambda_{K+1}} \in \mathcal{G}_m^{\Lambda_{K+1}}$ using Algorithm \ref{alg:QuasiNewton} initialized with
     $$
      G_{\Lambda_{K+1}}^{(0)} = 
      \left[\begin{matrix}
       G_{\Lambda_K} \\ [0,\hdots,0]
      \end{matrix}\right]\in\R^{(K+1)\times m}
     $$
     Define $g_{\Lambda_{K+1}}(\cdot) = G_{\Lambda_{K+1}}^T \Phi(\cdot)$, where $\Phi = [\Phi_1 ,\hdots, \Phi_{\alpha_{K+1}}]:\R^d\rightarrow\R^{K+1}$\;
}
  \KwOutput{final iterate $g_{\Lambda_{K_{\max}}}$}
\end{algorithm2e}

\begin{remark}
 The greedy procedure of Algorithm~\ref{alg:GreedyFeatureMaps} can get stuck because it ``doesn't see'' behind the reduced margin. For instance, if a relevant index is located above $\mathcal{M}(\Lambda_K)$ and if the gradient vanishes on the reduced margin, the algorithm will never activate that index.  \cite{migliorati2019adaptive}  suggests a safeguard mechanism to avoid this: arbitrarily activate the most ancient index from the reduced margin every $n$-th iteration. In our numerical tests, however, we never needed such a safeguard mechanism.
 
\end{remark}

\subsection{Adaptive polynomial profile function space}\label{sec:AdaptiveProfile}

In this section we assume the feature map $g$ has been computed using Algorithm \ref{alg:GreedyFeatureMaps}.
We now build the profile function $f$ in a polynomial space $\mathcal{F}_m$.
As in the previous section, we propose to greedily enrich $\mathcal{F}_m$ so that the minimum of the empirical mean squared error $\widehat{\mathcal{E}}_g (f) = \frac{1}{N}\sum_{i=1}^N (u({\bx}^{(i)})-f\circ g ({\bx}^{(i)}))^2$ over $f\in\mathcal{F}_m$ is minimized.
Since the gradients $u({\bx}^{(1)}),\hdots,u({\bx}^{(N)})$ are available, we instead consider the gradient-enhanced empirical mean squared error,
\begin{equation}\label{eq:RMShat_gradientEnhanced}
 \widehat{\mathcal{E}}_g^\nabla (f) = \frac{1}{N}\sum_{i=1}^N
 \Big( (u({\bx}^{(i)})-f\circ g ({\bx}^{(i)}))^2
 + \| \nabla u({\bx}^{(i)})- \nabla  f\circ g ({\bx}^{(i)}) \|^2
 \Big).
\end{equation}
Using $\widehat{\mathcal{E}}_g^\nabla (f)$ instead of $\widehat{\mathcal{E}}_g(f)$ is known to yield better mean squared error in the small sample regime; see \cite{peng2016polynomial}. 
This will be illustrated in the next section.
Given a finite multi-index set $\Gamma_L\subseteq\mathbb{N}^m$ we introduce
\begin{equation}\label{eq:Fm_Space_LambdaK}
 \mathcal{F}_m^{\Gamma_L} = 
 \text{span}\{\Psi_\alpha;\alpha\in\Gamma_L\},
\end{equation}
where $\Psi_\alpha$ denotes the $\alpha$-th multivariate Hermite polynomial.
These polynomials form an orthogonal basis of $L^2_{\mathcal{N}(0,I_m)}$. 
In the present context it would have been preferable to work with a $L^2_{g_\sharp\mu}$-orthogonal basis, but such a basis is not readily obtainable as it would require computing expensive high-dimensional integrals (e.g., for a Gram-Schmidt procedure).
We justify the use of Hermite basis by the fact that, since $g(\bX)$ is centered and has identity covariance (recall the constraints in \eqref{eq:minJ_constrained}), $\{\Psi_\alpha\}_{\alpha\in\mathbb{N}^d}$ is a relatively well conditioned basis in $L^2_{g_\sharp\mu}$.
We show numerically in Section \ref{sec:Numerics} that Hermite polynomials perform well.

As in the previous section, we propose to build a sub-optimal solution to the best $L$-term approximation problem
$$
 \min_{\substack{\Gamma_L \subseteq \mathbb{N}^d \\ \#\Gamma_L=L}}
 \min_{f\in \mathcal{F}_m^{\Gamma_L} }  \widehat{\mathcal{E}}_g^\nabla (f)
$$
by greedily constructing the multi-index set as follows:
$
 \Gamma_{L+1} = \Gamma_L \cup \lambda_{L+1},
$
where $\lambda_{L+1}\subseteq\mathcal{M}(\Gamma_L)$ is the smallest multi-index set such that
\begin{equation}\label{eq:greedyGamma}
 \left(\sum_{\alpha\in\lambda_{L+1}} \left|
 \frac{\d}{\d t} \widehat{\mathcal{E}}_g^\nabla (f_{\Gamma_L} + t \Psi_{\alpha})\Big|_{t=0} 
 \right|^2
 \right)
 \geq \theta
 \left(
 \sum_{\alpha\in\mathcal{M}(\Gamma_L)} \left|
 \frac{\d}{\d t} \widehat{\mathcal{E}}_g^\nabla (f_{\Gamma_L} + t \Psi_{\alpha})\Big|_{t=0} 
 \right|^2
 \right).
\end{equation}
Here, $f_{\Gamma_L}$ denotes the minimizer of $\widehat{\mathcal{E}}_g^\nabla (f)$ over $f\in\mathcal{F}_m^{\Gamma_L}$ and $\mathcal{M}(\Gamma_L)$ the reduced margin of $\Gamma_L$.
This is summarized in Algorithm \ref{alg:GreedyProfile}.
Since $\widehat{\mathcal{E}}_g^\nabla (f)$ is quadratic in $f$, this algorithm corresponds to an Orthogonal Matching Pursuit (OMP) approach, as explained in the next remark.

\begin{remark}
 Using the expansion $f = \sum_{l=1}^L w_l \Psi_{\alpha_l} \in \mathcal{F}_m^{\Gamma_L}$ with ${\mathbf w}=(w_1, \ldots , w_L)^T \in\R^L$, the gradient-enhanced empirical mean squared error \eqref{eq:RMShat_gradientEnhanced} can be written as
 $
  \widehat{\mathcal{E}}_g^\nabla (f) = \| y - A {\mathbf w}\|^2,
 $
 where $y\in\R^{N(d+1)}$ is given by 
 $$
  y = 
  \frac{1}{\sqrt{N}}\begin{pmatrix}
   u(\bx^{(1)}) & \hdots & u(\bx^{(N)}) \\
   \nabla u(\bx^{(1)}) & \hdots & \nabla u(\bx^{(N)}) \\
  \end{pmatrix}_\text{vec}
 $$
 and the $\alpha$-th column of the matrix $A=[A_{\alpha_1}\cdots A_{\alpha_L}]\in\R^{N(d+1) \times L}$ is
 $$
  A_\alpha = 
  \frac{1}{\sqrt{N}}
  \begin{pmatrix}
   \Psi_\alpha({\bz}^{(1)}) & \hdots & \Psi_\alpha({\bz}^{(N)}) \\
   \nabla g({\bx}^{(1)}) \nabla \Psi_\alpha({\bz}^{(1)}) & \hdots & \nabla g({\bx}^{(N)}) \nabla \Psi_\alpha({\bz}^{(N)}) \\
  \end{pmatrix}_\text{vec}
 $$
 with ${\bz}^{(i)}=g({\bx}^{(i)})$.
 Recall that the subscript ``vec'' stands for the vectorization of a matrix. 
 Thus we have
 $
  |\frac{\d}{\d t} \widehat{\mathcal{E}}_g^\nabla (f + t \Psi_{\alpha})|_{t=0}| = 
 |
  A_\alpha^T ( y-Ax^L )
 |,
 $
 which shows that the selection procedure \eqref{eq:greedyGamma} corresponds to choosing the (nonactive) column of $A_\alpha$ which is most correlated with the residual $y-Ax^L$. This is similar to the OMP algorithm \cite{tropp2007signal}; the difference is that, instead of seeking $\alpha$ in a prescribed set, Algorithm \eqref{eq:greedyGamma} seeks $\alpha$ in $\mathcal{M}(\Gamma_L)$, which evolves during the iteration process.
\end{remark}

\begin{algorithm2e}[t]
\caption{Construction of profile function $f$ on downward-closed polynomial space\label{alg:GreedyProfile}}
\SetKwInput{KwInput}{Input}                
\SetKwInput{KwOutput}{Output}              
\SetKwInput{KwRequire}{Require}
\DontPrintSemicolon
  \KwData{Training sample}
  \KwInput{Feature map $g$ with intermediate dimension $m$, max iteration $L_{\max}$, parameter $\theta$}
  
    Initialize $\Gamma_{0}=\{(0,\hdots,0)\}$ \;
    Solve the least-squares problem $f_{\Gamma_0} = \min\{ \widehat{\mathcal{E}}_g^\nabla (f) ;f\in\mathcal{F}_m^{\Gamma_0} \}$ \;
   
   \For{$L=0,\hdots,L_{\max}-1$}{
   Compute $|\frac{\d}{\d t} \widehat{\mathcal{E}}_g^\nabla (f + t \Psi_{\alpha})|_{t=0}|$ for all $\alpha\in\mathcal{M}(\Gamma_{L})$  \; 
   Select $\lambda_{L+1}$ as in \eqref{eq:greedyGamma} \; 
   Update $\Gamma_{L+1} = \Gamma_{L} \cup \lambda_{L+1}$ and $\mathcal{F}_m^{\Gamma_{L+1}}$ \; 
   Solve the least-squares problem $f_{\Gamma_{L+1}} = \min\{ \widehat{\mathcal{E}}_g^\nabla (f) ;f\in\mathcal{F}_m^{\Gamma_{L+1}} \}$ \;
   }
   
  \KwOutput{final iterate $f_{\Gamma_{L_{\max}}}$}
\end{algorithm2e}

\subsection{Cross-validation}\label{sec:CrossValidation}

Algorithms \ref{alg:GreedyFeatureMaps} and \ref{alg:GreedyProfile} need to be stopped before they begin overfitting the data. We employ the $\nu$-fold cross-validation procedure decribed in Algorithm \ref{alg:CV}. It consists of partitioning the initial sample $\Xi = \{({\bx}^{(i)},u({\bx}^{(i)}),\nabla u({\bx}^{(i)})\}_{i=1}^N$ into $\nu$ subsets $\Xi_i^{\textup{train}}$, $i=1, \ldots, \nu$ of equal cardinality $N/\nu$, then running the algorithms on each subset $\Xi_i^{\textup{train}}$ while monitoring the error on the corresponding test set $\Xi_i^{\textup{test}}=\Xi \backslash\Xi^\textup{train}_i$. The optimal number of iterations $K^*$ (for Algorithm \ref{alg:GreedyFeatureMaps}) and $L^*$ (for Algorithm \ref{alg:GreedyProfile}) are those which minimize the test error averaged over the $\nu$ folds. With these numbers in hand, we then run $K^*$ and $L^*$ iterations of the algorithms on the entire sample.

In Algorithm \ref{alg:CV}, we use the same sample to train both $f$ and $g$. Alternatively, we can build $f$ and $g$ using two independent samples. We tried this alternative without obtaining significant improvement. Thus, in the context where the model $u$ is expensive to evaluate, we recommend training $f$ and $g$ on the same sample.

\begin{algorithm2e}[H]
\SetKwInput{KwInput}{Input}                
\SetKwInput{KwOutput}{Output}              
\DontPrintSemicolon
  \caption{Learning a composed model $f\circ g \approx u$ using values and gradients of $u$ \label{alg:CV}}
  \KwData{Sample $\{({\bx}^{(i)},u({\bx}^{(i)}),\nabla u({\bx}^{(i)})\}_{i=1}^N$}
  \KwInput{Intermediate dimension $m$, max iteration $K_{\max}$ and $L_{\max}$, number of folds $\nu$\\~}

  \SetKwProg{Fn}{Partition the data set $\Xi = \{({\bx}^{(i)},u({\bx}^{(i)}),\nabla u({\bx}^{(i)})\}_{i=1}^N$ for cross validation}{}{}
  \Fn{}{
    Partition $\Xi$ into $\nu$ subsets of equal cardinality:\;
    \textbf{ - $i$-th test set:}   $\Xi^\textup{test }_i $ is the $i$-th subset of $\Xi$ \;
    \textbf{ - $i$-th training set:} $\Xi^\textup{train}_i = \Xi \backslash\Xi^\textup{test }_i $
  }
  ~\\~
  
  \SetKwProg{Fn}{Construction of the feature map}{}{}
  \Fn{}{
        \For{$i=1,\hdots,\nu$}{
      Run $K_{\max}$ iterations of Algorithm \ref{alg:GreedyFeatureMaps} on the \textbf{$i$-th training set}\;
      Store the iterates $g^{(1)},\hdots,g^{(K_{\max})}$\;
      Monitor the loss $\mathcal{J}_{i,j} = \widehat J(g^{(j)})$, $1\leq j \leq K_{\max}$, on the \textbf{$i$-th test set}\;
    }
    Define $K^*$ as the minimum of the mean $j\mapsto\frac{1}{\nu}\sum_{i=1}^\nu \mathcal{J}_{i,j}$\;
    Run $K^*$ iterations of Algorithm \ref{alg:GreedyFeatureMaps} using the \textbf{whole sample} $\Xi$\;
    \KwRet feature map $g=g^{(K^*)}$\;
  }
  ~\\~
  
  \SetKwProg{Fn}{Construction of the profile}{}{}
  \Fn{}{
        \For{$i=1,\hdots,\nu$}{
      Run $L_{\max}$ iterations of Algorithm \ref{alg:GreedyProfile} on the \textbf{$i$-th training set}\;
      Store the iterates $f^{(1)},\hdots,f^{(L_{\max})}$\;
      Monitor the mean squared error $\mathcal{E}_{i,j} = \widehat{\mathcal{E}}_g(f^{(j)})$, $1\leq j \leq L_{\max}$, on the \textbf{$i$-th test set}\;
    }
    Define $L^*$ as the minimum of the mean $j\mapsto\frac{1}{\nu}\sum_{i=1}^\nu \mathcal{E}_{i,j}$\;
    Run $L^*$ iterations of Algorithm \ref{alg:GreedyProfile} using the \textbf{whole sample} $\Xi$\;
    \KwRet profile function $f=f^{(L^*)}$\;
  }
  ~\\~

  \KwOutput{Composed approximation $f\circ g$}
\end{algorithm2e}

\section{Numerical examples}\label{sec:Numerics}

Source code for the algorithms above and numerical experiments below is freely available\footnote{\texttt{https://gitlab.inria.fr/ozahm/nonlinear-dimension-reduction-for-surrogate-modeling.git}} so that all results presented here are entirely reproducible.
Our implementation uses the toolbox \emph{ApproximationToolbox} \cite{nouy_anthony_2020_3653971}.

\subsection{Isotropic function}

We first consider the function $u:\R^d\rightarrow\R$ with $d=20$ defined by
$$
 u(\bx) =  \cos(\|\bx\|_2),
$$
and we let $\mu=\mathcal{N}(0,I_d)$ be the standard normal distribution. This function is isotropic: it cannot be well approximated by $f\circ g$ with a linear feature map $g$. However, if one allows $g$ to be a quadratic polynomial, the function $g(x)=x_1^2+\hdots+x_{20}^2=\|\bx\|_2^2$ allows one to write $u=f\circ g$ with a rather simple one-dimensional profile function, $f(z)=\cos(\sqrt{z})$. 

First we assess the performance of the quasi-Newton method (Algorithm \ref{alg:QuasiNewton}) for the minimization of $g\mapsto\widehat J(g)$ over a \emph{fixed} space of feature maps $\mathcal{G}_m$. Results are reported in Figure \ref{fig:CosOfNorm_QuasiNewton}. During the first 20 iterations, $\mathcal{G}_m$ is chosen to be the space of linear feature maps; after the 21st iteration, $\mathcal{G}_m$ is enlarged to contain linear and quadratric feature maps. During the first period, we observe a rapid convergence of $J(g)$ towards a plateau which decreases with $m$. Once the quadratic terms are activated, $J(g)$ converges toward zero at an exponential rate. This shows the efficiency of the quasi-Newton approach in Algorithm \ref{alg:QuasiNewton} for building $g$ on a fixed function space $\mathcal{G}_m^{\Lambda_{K}}$. We observe that the convergence rates are not the same for $m=1$, $m=5$, and $m=10$.

\begin{figure}[t]
  \centering 
  \includegraphics[width = 0.55\textwidth]{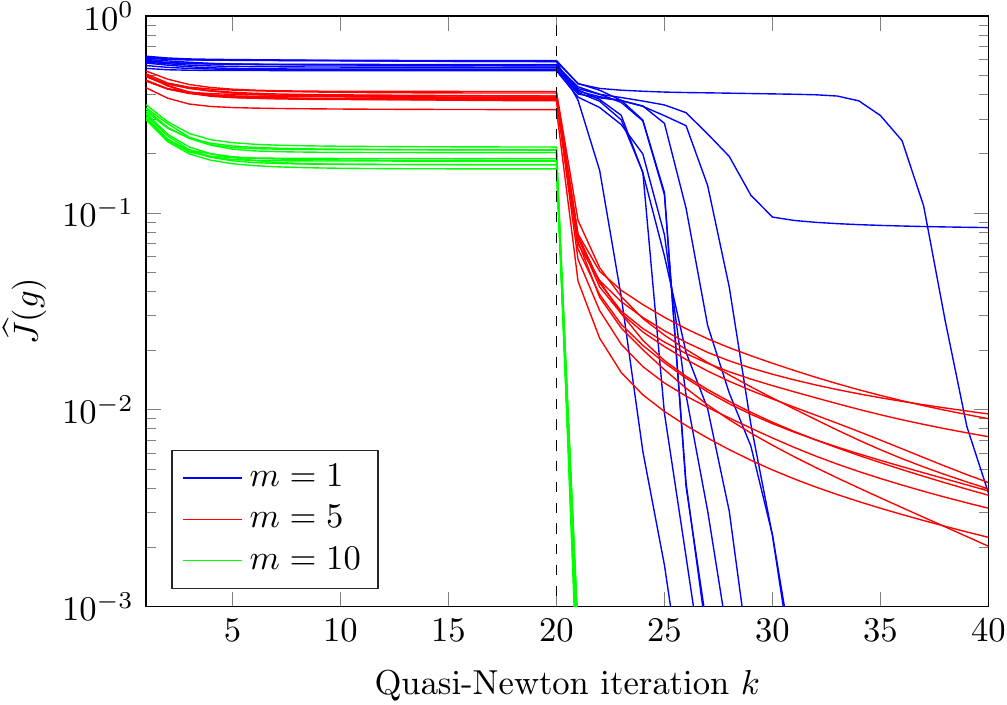} 
  \caption{Isotropic function. Evolution of $\widehat J(g)$ during the quasi-Newton algorithm \ref{alg:QuasiNewton} using $N=100$ gradients of $u$ (10 different realizations). For the first 20 iterations, $\mathcal{G}_m^{\Lambda_{K}}$ contains linear functions only ($\#\Lambda_{K} = 20$). At the 21st iteration, $\mathcal{G}_m^{\Lambda_{K}}$ is enlarged to include all quadratic functions ($\#\Lambda_{K}=20+210=230$).}
  \label{fig:CosOfNorm_QuasiNewton}
\end{figure}

Figure \ref{fig:CosOfNorm_OMP_g} shows the behavior of the \emph{adaptive} Algorithm \ref{alg:GreedyFeatureMaps} for constructing a feature map $g$. Recall that Algorithm \ref{alg:GreedyFeatureMaps} is initialized with $\Lambda_K=\{\alpha\in\mathbb{N}^{20}:\sum_{i=1}^d\alpha_i=1\}$, which corresponds to the space of linear feature maps. For this experiment, we enrich $\Lambda_K$ with only one multi-index at a time,  i.e., $\Lambda_{K+1}=\Lambda_K\cup\{\alpha_{K+1}\}$ with $\alpha_{K+1}$ as in \eqref{eq:alphaKplus1}. We observe that the algorithm is always capable of building a polynomial $g$ such that $J(g)=0$ with very few greedy iterations. Note that for large $m$, $J(g)=0$ is attained earlier, i.e., for smaller $\#\Lambda_K$. To explain this phenomenon, Table \ref{tab:IsotropicFunctionExactDecompositions} lists a few exact decompositions $u=f\circ g$, where we see that a large intermediate dimension $m$ compensates for a small feature map space $\#\Lambda_K$.

Figure \ref{fig:CosOfNorm_OMP_f} shows the performance of Algorithm \ref{alg:GreedyProfile}. We set the bulk chasing parameter to $\theta=0.3$ and we run a cross-validation procedure (Algorithm \ref{alg:CV}) with $\nu=5$ folds to determine when to stop the enrichment process. With $m=1$, the algorithm is capable of recovering a very accurate approximation to $u$ (error below $10^{-4}$) with only $N=100$ samples. In contrast, using the same sample, a full dimensional polynomial approximation (black curves in Figure \ref{fig:CosOfNorm_OMP_f}) can barely attain errors below $10^{-1}$. With intermediate dimensions $m=5$ and $m=10$, we still outperform the full dimensional approach $d=m$,  but the error does not reach $10^{-2}$.
This example nicely illustrates the fundamental issue of balancing the complexity between $f$ and $g$: 
\begin{itemize}
 \item With $m=1$, we obtain a complex $g\in\mathcal{G}_m^{\Lambda_K}$ with $\#\Lambda_K\geq40$ and a simple $f\in\mathcal{F}_m^{\Gamma_L}$ with $\#\Gamma_L\leq5$. Error is below $10^{-4}$.
 \item With $m=5$ or $m=10$, we obtain a simpler $g\in\mathcal{G}_m^{\Lambda_K}$ with $30\leq\#\Lambda_K\leq40$ and a more complex $f\in\mathcal{F}_m^{\Gamma_L}$ with $20\leq\#\Gamma_L\leq100$. 
 Error is around $2 \times 10^{-2}$.
 \item With $m=d$, (no dimension reduction) $g(x)=x$ is linear and $f\in\mathcal{F}_m^{\Gamma_L}$ with $\#\Gamma_L\geq300$. Error barely falls below $10^{-1}$.
\end{itemize}
Clearly, for the considered isotropic function, the optimal choice of intermediate dimension is $m=1$. We will see in the next examples that this is not always the case.

\begin{figure}[t]
  \centering 
  \begin{subfigure}[t]{.4\textwidth}
  \centering
  \includegraphics[width=1\linewidth]{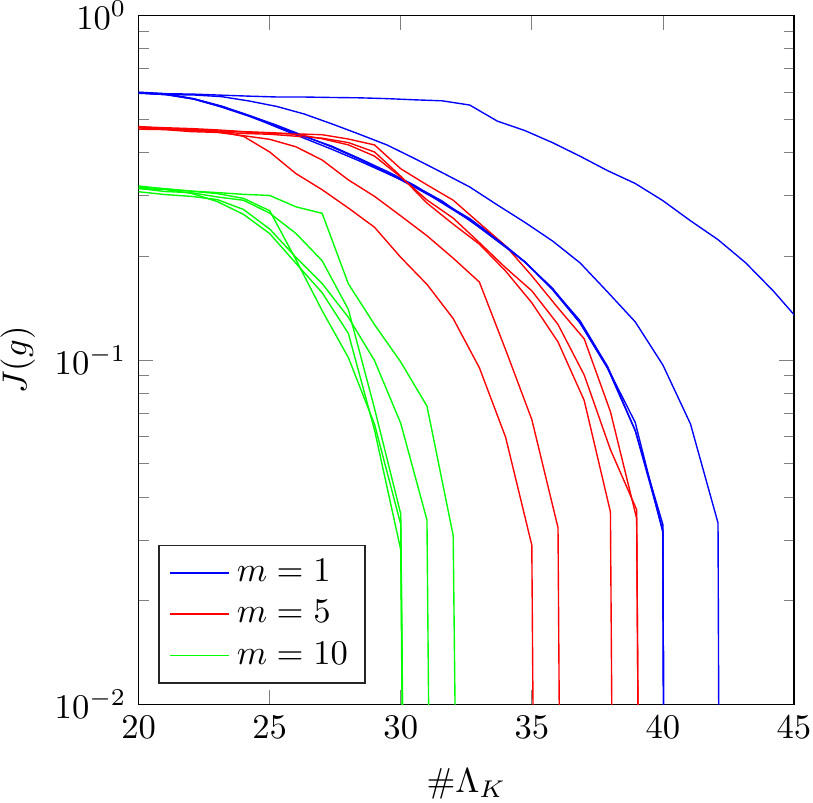} 
  \caption{Evolution of $J(g)$ during the greedy enrichment process of Algorithm  \ref{alg:GreedyFeatureMaps}.}
  \label{fig:CosOfNorm_OMP_g}
\end{subfigure}\hspace{1cm}
\begin{subfigure}[t]{.5\textwidth}
  \centering
  \includegraphics[width=0.8\linewidth]{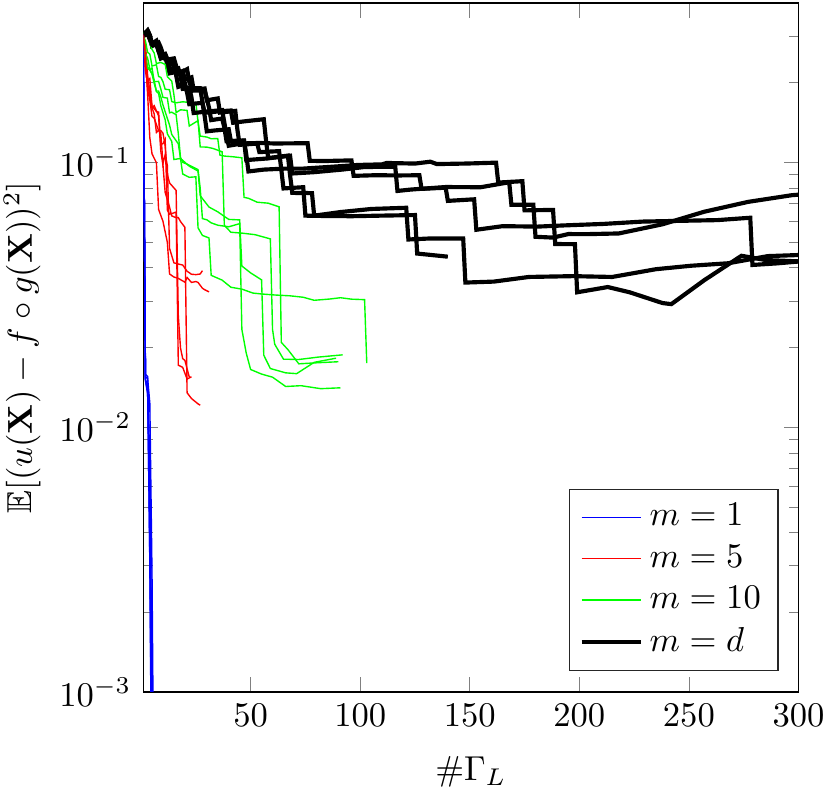} 
  \caption{Evolution of the mean squared error during the greedy Algorithm  \ref{alg:GreedyProfile}. The black curve $m=d$ is obtained by running Algorithm \ref{alg:GreedyProfile} with $g(x)=x$, the identity map.}
  \label{fig:CosOfNorm_OMP_f}
\end{subfigure}

  \caption{Isotropic function. 
  Performances of Algorithms \ref{alg:GreedyFeatureMaps} and \ref{alg:GreedyProfile} using $N=100$ samples (5 realizations).
  First, we construct $g$ using Algorithm \ref{alg:GreedyFeatureMaps} (left plot) and then, given $g$, we construct $f$ using Algorithm \ref{alg:GreedyProfile} (right plot).
  Both $J(g)$ and $\E[(u(\bX)-f\circ g(\bX))^2]$ are computed here on a large validation sample of size $2000$.}
  \label{fig:CosOfNorm_OMP}
\end{figure}

\setlength\tabcolsep{1 pt}
\begin{table}[t]
\small
\begin{tabularx}{1\textwidth}{|c|>{\centering\arraybackslash}X|c|c|}
\hline
 ~~~$m=1$~~~& $f(z) = \cos(\sqrt{z})$ & $g(\bx)=(x_1^2+\hdots+x_{20}^2)$ & $\#\Lambda_K=40$ \\\hline
 $m=2$& $f(z_1,z_2) = \cos(\sqrt{z_1^2+z_2})$ & 
 ~~~$g(\bx) = \left(\begin{array}{l} x_{1} \\ x_2^2+\hdots+x_{20}^2 \end{array} \right)$~~~
 & 
 ~~~$\#\Lambda_K=39$~~~
 \\\hline
 $\vdots$& $\vdots$ &  $\vdots$ & $\vdots$ \\\hline
 $m=19$& $f(z_1,\hdots,z_{19})=\cos(\sqrt{z_1^2+\hdots+z_{18}^2+z_{19}})$ &  $g(\bx) = \left(\begin{array}{l} x_{1}  \\ \vdots \\ x_{18} \\ x_{19}^2 + x_{20}^2  \end{array}\right)$ & $\#\Lambda_K=22$ \\\hline
 $m=20$& $f(z_1,\hdots,z_{20})=\cos(\sqrt{z_1^2+\hdots+z_{20}^2})$ & $g(\bx) = \left(\begin{array}{l} x_{1}  \\ \vdots \\ x_{20} \end{array}\right)$   & $\#\Lambda_K=20$ \\\hline
\end{tabularx}
\caption{Isotropic function. List of exact decompositions $u=f\circ g$ with polynomials $g\in\mathcal{G}_m^{\Lambda_K}$ with $\#\Lambda_K$ ranging from $40$ (and $m=1$) to $20$  (and $m=20$). This explains why, in Figure \ref{fig:CosOfNorm_OMP_g}, $J(g)$ drops to zero earlier in $\#\Lambda_K$ when $m$ is large.}
\label{tab:IsotropicFunctionExactDecompositions}
\end{table}

\subsection{Borehole function}\label{sec:Borehole}

Our second example is the commonly used Borehole function~\cite{borehole}, which models water flow through a borehole. It is a function of $d=8$ variables defined by
$$
 u(\bX) = \frac{2\pi T_u(H_u-H_l)}{\ln(r/r_w)\left(1+\frac{2LT_r}{\ln(r/r_w)r_w^2K_w} + \frac{T_r}{T_l}\right)},
$$
where $\bX$ is a random vector in $\R^d$ with independent components given by
$$
 \begin{array}{llll}
 X_1 = r_w &\sim \mathcal{N}(0.10, 0.0161812), \qquad\qquad\qquad&
 X_5 = r &\sim \log\mathcal{N}(7.71, 1.0056),  \\  
 X_2 = T_u &\sim \mathcal{U}[63070, 115600], &
 X_6 = H_u &\sim \mathcal{U}[990, 1110], \\
 X_3 = T_l &\sim \mathcal{U}[63.1, 116], &
 X_7 = H_l &\sim \mathcal{U}[700, 820], \\
 X_4 = L &\sim \mathcal{U}[1120, 1680], &
 X_8 = K_w &\sim \mathcal{U}[9855, 12045].
 \end{array}
$$

We first numerically illustrate Proposition \ref{prop:PoincareConditional}. Recall that this proposition states that, given $g\in\mathcal{G}_m$, there exists a function $f$ such that the mean squared error $\E[( u(\bX)-f(g(\bX)) \big)^2]$ is bounded by $J(g)$ multiplied by the Poincar\'e-type constant $\mathbb{C}(\bX|\mathcal{G}_m)$. In general, $\mathbb{C}(\bX|\mathcal{G}_m)$ is unknown.
We build three feature maps $g$: a linear map, a quadratic map, and a cubic map defined as the minimizers of $\widehat J(g)$ over the polynomial spaces
\begin{align*}
 \mathcal{G}_m^{\Lambda_\text{lin}} \quad\text{where}\quad
 \Lambda_\text{lin} &=\left\{\alpha\in\mathbb{N}^8: 1\leq\sum_{i=1}^8\alpha_i \leq 1\right\}, \quad\#\Lambda_\text{lin} = 8 ,\\
 \mathcal{G}_m^{\Lambda_\text{quad}} \quad\text{where}\quad
 \Lambda_\text{quad} &=\left\{\alpha\in\mathbb{N}^8: 1\leq\sum_{i=1}^8\alpha_i \leq 2\right\}, \quad\#\Lambda_\text{quad} = 44 ,\\
 \mathcal{G}_m^{\Lambda_\text{cub}} \quad\text{where}\quad
 \Lambda_\text{cub} &=\left\{\alpha\in\mathbb{N}^8: 1\leq\sum_{i=1}^8\alpha_i \leq 3\right\}, \quad\#\Lambda_\text{cub} = 164,
\end{align*}
respectively.
To compute these feature maps, we estimate $\widehat J(g)$ with $N=30$, $60$, or $150$ samples.
The dashed curves in Figure \ref{fig:Borehole_FixedG} are the resulting $J(g)$ (computed on a validation set of size $N=2000$) as a function of $m$. 
Once $g$ is built, we construct the profile $f$ using Algorithm \ref{alg:GreedyProfile} on the same sample.
The continuous lines in Figure \ref{fig:Borehole_FixedG} represent $\E[(u(\bX)-f\circ g(\bX))^2]$ (computed on the validation set).
As the sample size $N$ increases, we obtain a better profile function $f$, and the mean squared error decreases until it falls below $J(g)$. We also observe that the larger $m$ is, the higher $N$ must be to obtain a mean squared error below $J(g)$. Domination of the mean squared error by $J(g)$ is consistent with Proposition \ref{prop:PoincareConditional} with a Poincar\'e-type constant $\mathbb{C}(\bX|\mathcal{G}_m)$ that seems to be close to one for this benchmark.

In the limit $N\rightarrow\infty$, $g$ converges towards the optimal linear/quadratic/cubic feature map while the profile function $f$, built adaptively in Algorithm \ref{alg:GreedyProfile}, converges towards the solution of
$$ 
 \min_{f:\R^m\rightarrow\R}\E[(u(\bX)-f\circ g(\bX))^2].
$$
With a larger polynomial degree for $g$, the best achievable error $\min_{f:\R^m\rightarrow\R}\E[(u(\bX)-f\circ g(\bX))^2]$ is smaller and so we obtain a better approximation $f\circ g$ to $u$. Notice, however, that when the mean squared error is far above $J(g)$ (typically for large $m$), increasing the polynomial degree of $g$ does not significantly improve the approximation $f\circ g$. 
The interpretation is that if we cannot build a sufficiently accurate profile function $f$ (either because $m$ is too large or $N$ is too small), there is no benefit in having a complex (i.e., high polynomial degree) feature map $g$.

\begin{figure}[t]
  \centering 
  \begin{subfigure}{.31\textwidth}
  \centering
  \includegraphics[width=1\linewidth]{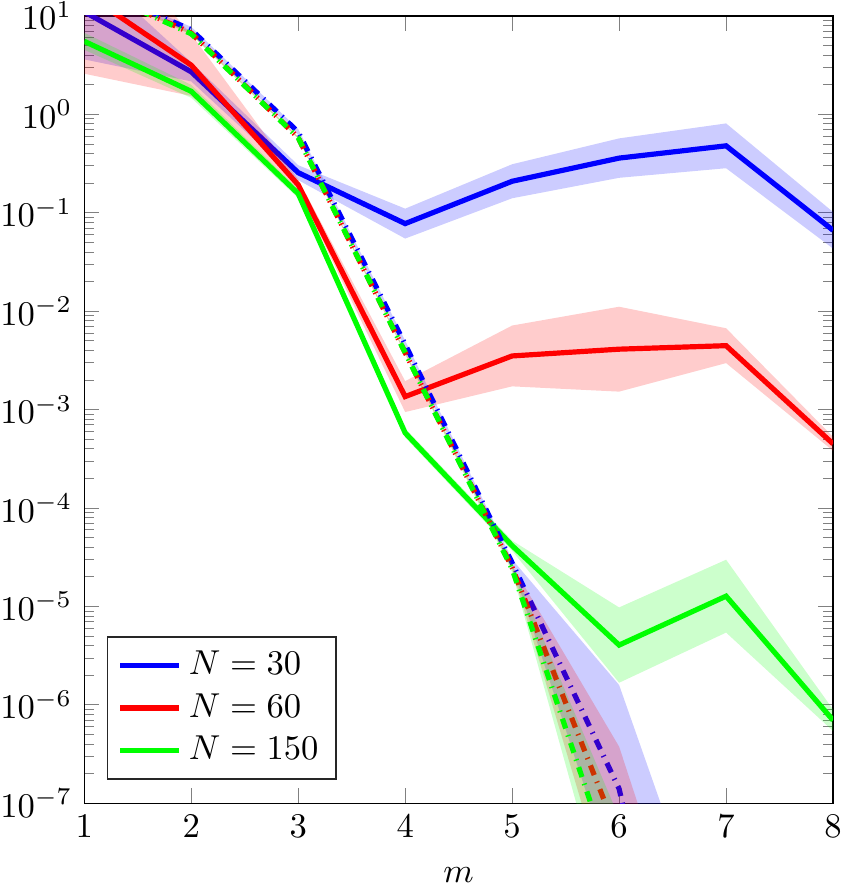} 
  \caption{Linear feature map}
  \label{fig:Borehole_FixedG_degree1}
\end{subfigure}
\begin{subfigure}{.31\textwidth}
  \centering
  \includegraphics[width=1\linewidth]{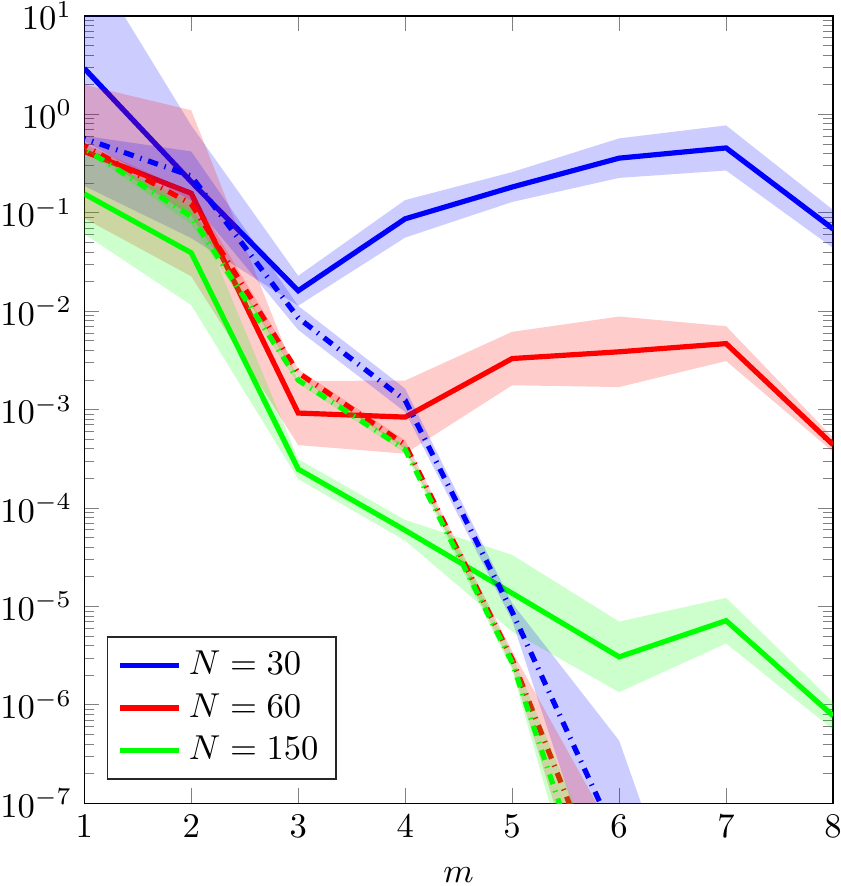} 
  \caption{Quadratic feature map}
  \label{fig:Borehole_FixedG_degree2}
\end{subfigure}
\begin{subfigure}{.31\textwidth}
  \centering
  \includegraphics[width=1\linewidth]{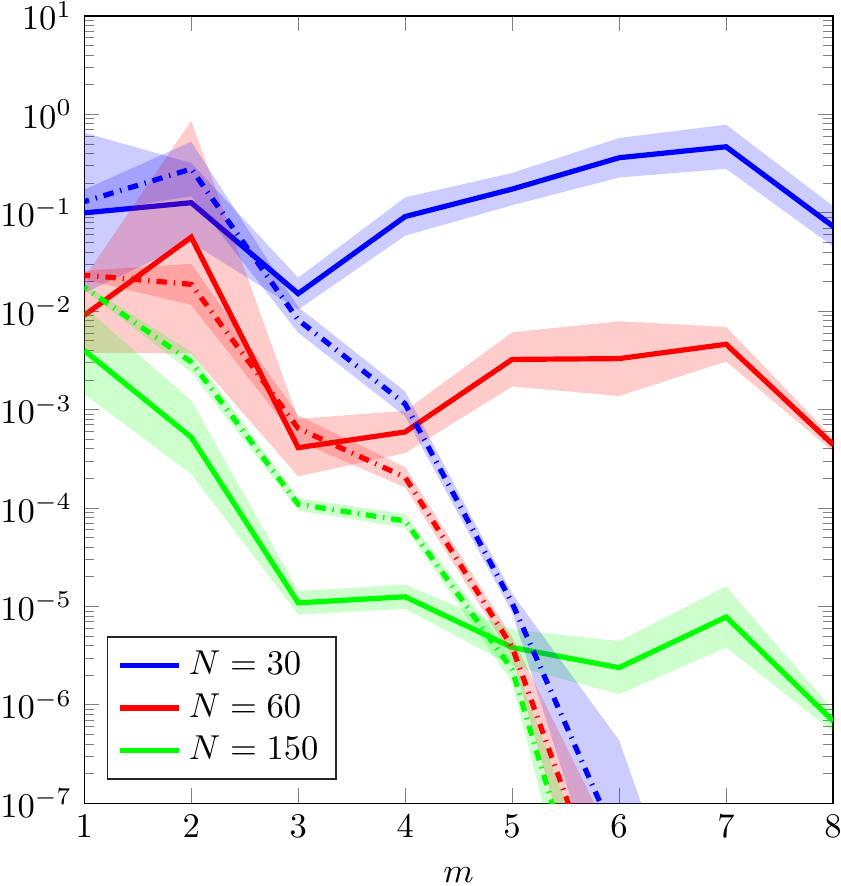} 
  \caption{Cubic feature map}
  \label{fig:Borehole_FixedG_degree3}
\end{subfigure}

  \caption{Borehole. Continuous lines: mean squared error $\E[(u(\bX)-f\circ g(\bX))^2]$, Dashed lines: cost function $J(g)$. The width of the shaded region corresponds to the standard deviation over $20$ experiments.
  The feature map $g$ is built by minimizing $\widehat J(g)$ using Algorithm \ref{alg:QuasiNewton} on samples of size $N\in\{30,60,150\}$.
  To build $f$, we employ Algorithm \ref{alg:GreedyProfile} on the same sample with bulk-chasing parameter $\theta=0.3$ and a five-fold cross-validation procedure to stop the iterations.
  }
  \label{fig:Borehole_FixedG}
\end{figure}

We now build both $g$ and $f$ adaptively using Algorithm \ref{alg:CV} with parameters $\theta=0.3$ and $\nu=5$ (from now on we use these parameters by default). Compared to the previous experiments where the polynomial degree of $g$ was fixed, the mean squared errors shown in Figure \ref{fig:Borehole_AdaptiveGandF} go to zero when $N\rightarrow\infty$, even for small $m$. 
Figure \ref{fig:Borehole_AdaptiveGandF_Cardinal} shows the cardinalities of $\Lambda_K$ and $\Gamma_L$ as functions of the intermediate dimension $m$. We clearly see that, for small $m$, our adaptive algorithm builds complex feature maps and simple profile functions. For large $m$, it is the other way around. 

From Figure \ref{fig:Borehole_AdaptiveGandF}, it seems that the optimal intermediate dimension $m$ depends on $N$: for small sample size $N=30$ or $N=60$, the best intermediate dimension is $m=2$ or $m=3$. For $N=150$, however, one clearly obtains better results with $m=d$, meaning without dimension reduction, i.e., $u(x)\approx f(x)$ with $g(x)=x$.

\begin{figure}[t]
  \centering 
\begin{subfigure}[t]{.45\textwidth}
  \centering
  \includegraphics[width=0.8\linewidth]{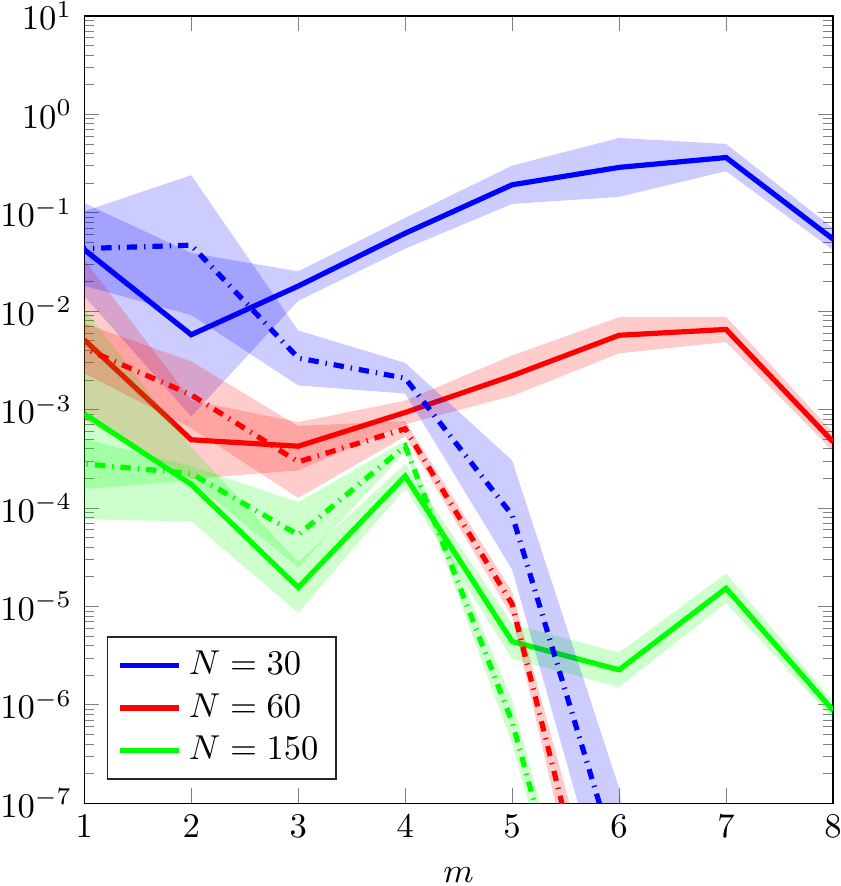} 
  \caption{Gradient-enhanced construction of $f$}
  \label{fig:Borehole_AdaptiveGandF}
\end{subfigure}
\begin{subfigure}[t]{.45\textwidth}
  \centering
  \includegraphics[width=0.75\linewidth]{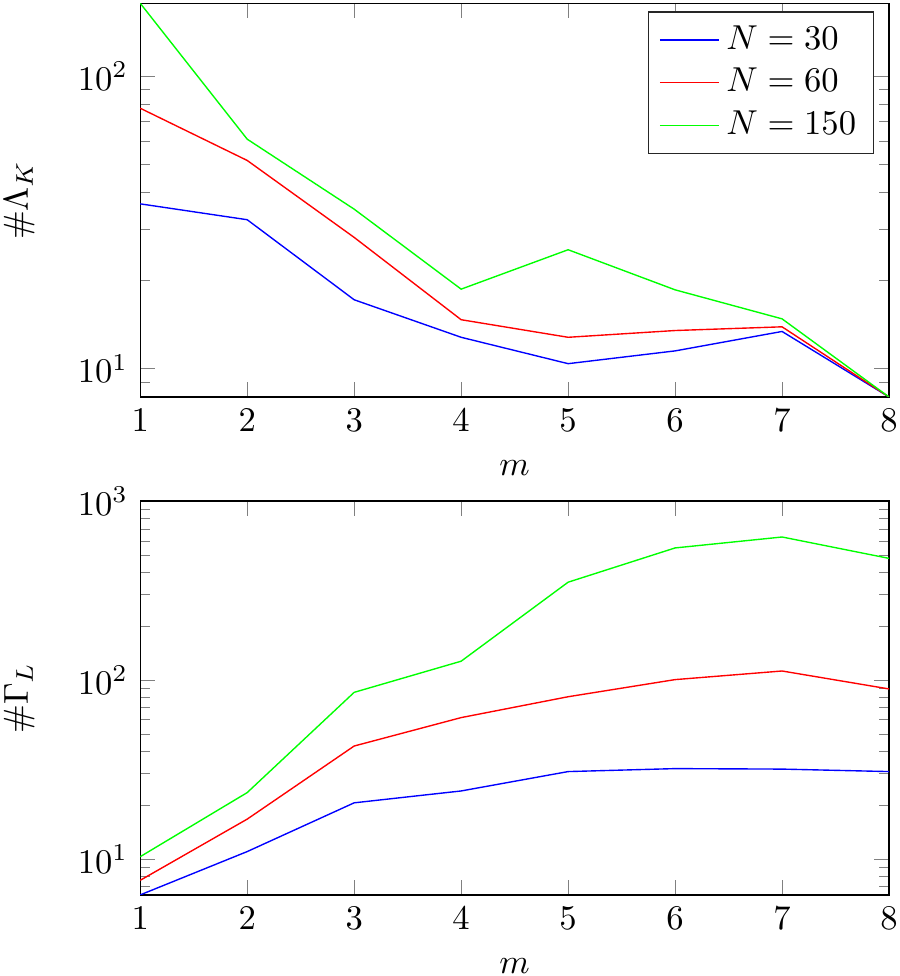} 
  \caption{Mean cardinality of $\Lambda_K$ (top) and of $\Gamma_L$ (bottom)}
  \label{fig:Borehole_AdaptiveGandF_Cardinal}
\end{subfigure}

  \caption{Borehole. Same settings as for Figure \ref{fig:Borehole_FixedG} but with a feature map $g$ built using the adaptive Algorithm \ref{alg:GreedyFeatureMaps}. The plots on the right show the complexity of $g$ and $f$ through the cardinalities of $\Lambda_K$ and $\Gamma_L$, respectively (mean over 20 experiments).
  }
  \label{fig:Borehole_AdaptiveG}
\end{figure}

\subsection{Composed function}\label{sec:ComposedFunction}

We consider now the benchmark introduced in \cite{grelier2018learning} defined as a deep composition of functions. We consider the function $u$ of $d=16$ variables defined by
\begin{align*}
 u(x) = h\Big(&h\big(h(h(x_1,x_2),h(x_3,x_4)),h(h(x_5,x_6),h(x_7,x_8))\big), \\ 
 &h\big(h(h(x_9,x_{10}),h(x_{11},x_{12})),h(h(x_{13},x_{14}),h(x_{15},x_{16}))\big)\Big),
\end{align*}
where $h(s,t) = 9^{-1}(1+st)^2$ and we let $\bX$ be the random vector with uniform measure on $[-1,1]^{16}$.
This function $u$ is a polynomial (as a composition of polynomials) and can readily be written as $u=f\circ g$ for $m=2,4,8$ with polynomials $f$ and $g$.

Numerical results are reported in Figure \ref{fig:DeepComposed}. For each choice of $N$ and $m$, after constructing the feature map $g$ via Algorithm~\ref{alg:GreedyFeatureMaps} and the cross-validation procedure in the first half of Algorithm~\ref{alg:CV}, we illustrate the benefits of the gradient-enhanced construction of the profile function $f$ by building it either with gradient-free least squares (i.e., by minimizing $\widehat{\mathcal{E}}_g (f) = \frac{1}{N}\sum_{i=1}^N (u({\bx}^{(i)})-f\circ g ({\bx}^{(i)}))^2$) or with gradient-enhanced least squares (i.e., by minimizing $\widehat{\mathcal{E}}_g^\nabla (f)$ in \eqref{eq:RMShat_gradientEnhanced}). 
For large $m$, the gradient-enhanced approach clearly outperforms the gradient-free approach, but for small $m$, both approaches perform equally. It seems that, for small $m$, the profile can be estimated accurately using evaluations of $u({\bx}^{(i)})$ only.
Since gradients are needed to construct $g$ regardless, our recommendation is always to use the gradient-enhanced approach to construct $f$, as it makes better use of the available information.

For this benchmark, it seems that $m=2$ is the best intermediate dimension for the considered range of sample sizes $N$. With this choice, the mean squared error can be reduced by around a factor of 10 over a full-dimensional function approximation scheme that simply uses $g = \text{Id}$ with the same sample.

\begin{figure}[t]
  \centering 
  \begin{subfigure}[t]{.31\textwidth}
  \centering
  \includegraphics[width=1\linewidth]{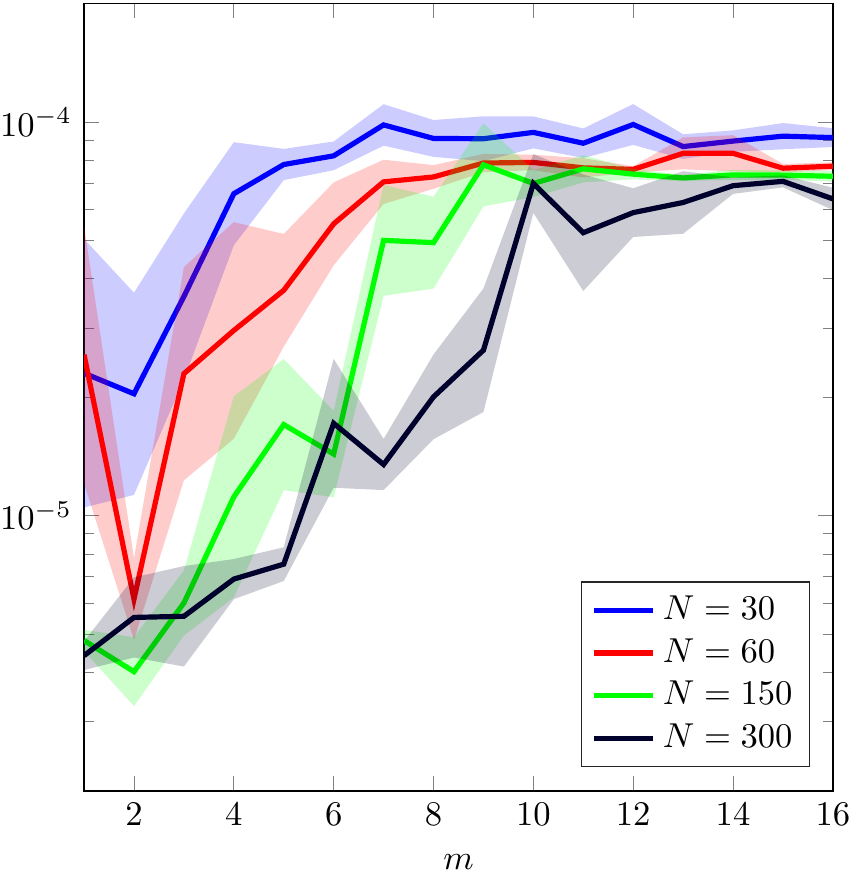} 
  \caption{Gradient-free construction of $f$}
  \label{fig:DeepComposed_AdaptiveGandF_GradFreeRegression}
  \end{subfigure}
\begin{subfigure}[t]{.31\textwidth}
  \centering
  \includegraphics[width=1\linewidth]{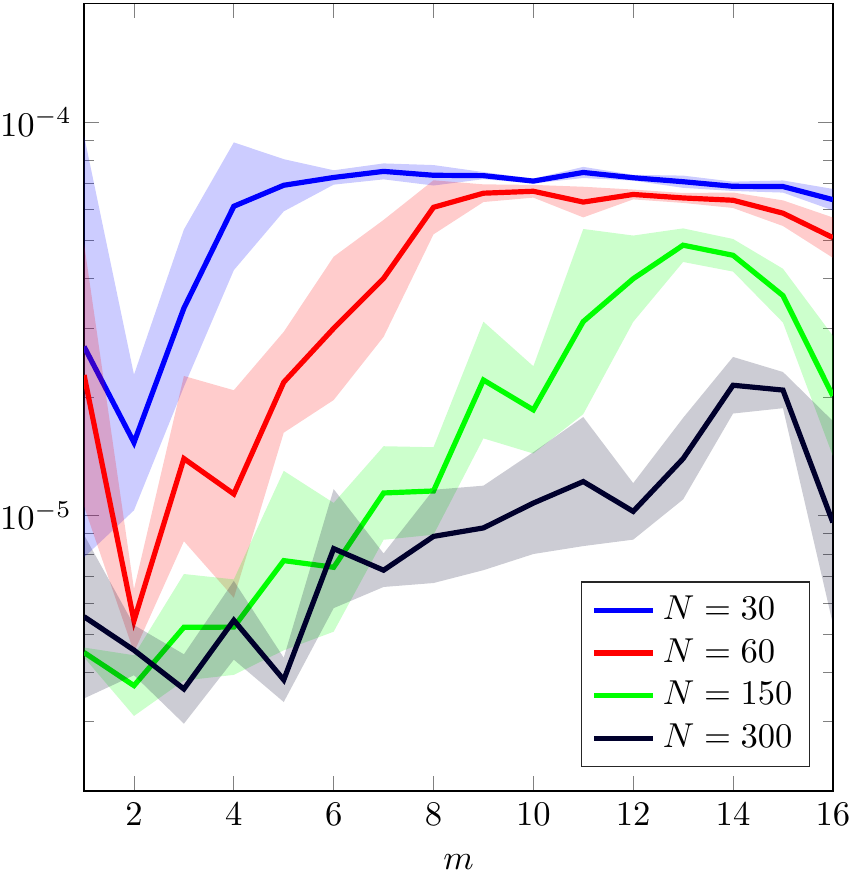} 
  \caption{Gradient-enhanced construction of $f$}
  \label{fig:DeepComposed_AdaptiveGandF}
\end{subfigure}
\begin{subfigure}[t]{.31\textwidth}
  \centering
  \includegraphics[width=1\linewidth]{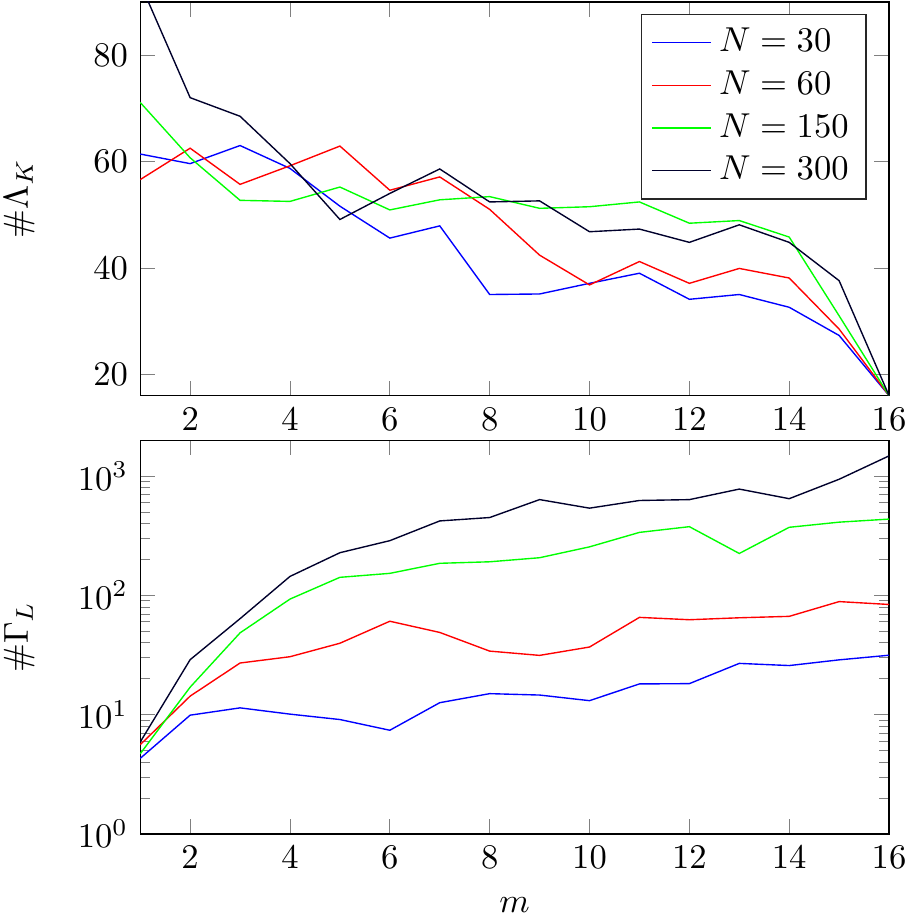} 
  \caption{Mean cardinality of $\Lambda_K$ (top) and of $\Gamma_L$ (bottom).}
  \label{fig:DeepComposed_AdaptiveGandF_Cardinal}
\end{subfigure}

  \caption{Composed function. 
  Mean squared error $\E[(u(\bX)-f\circ g(\bX))^2]$ (computed on a validation set of size $1000$) where $g$ and $f$ obtained by Algorithm \ref{alg:CV} ($\theta=0.3$ and $\nu=5$). The line (resp. the width of the shades) corresponds to the mean (resp. the variance) over $20$ experiments. Figure \ref{fig:DeepComposed_AdaptiveGandF_GradFreeRegression}: $f$ is built by minimizing the gradient-free mean square $\widehat{\mathcal{E}}_g (f) = \frac{1}{N}\sum_{i=1}^N (u({\bx}^{(i)})-f\circ g ({\bx}^{(i)}))^2$. Figure \ref{fig:DeepComposed_AdaptiveGandF}: $f$ is built by minimizing by minimizing $\widehat{\mathcal{E}}_g^\nabla (f)$, see \eqref{eq:RMShat_gradientEnhanced}.
  Figure \ref{fig:DeepComposed_AdaptiveGandF_Cardinal}: cardinalities of $\Lambda_K$ and of $\Gamma_L$ (with the gradient-enhanced construction of $f$).
  }
  \label{fig:DeepComposed}
\end{figure}

\subsection{Resonance frequency of a bridge}

Our last numerical experiment is a PDE-based model where the quantity of interest $u(\bx)$ is the smallest resonance frequency of a 2D structure which has the shape of a bridge, as shown in Figure \ref{fig:Morandi}. Here, $\bx$ parameterizes the Young modulus field of the structure. An important feature of this problem is that, while it relies on a complex numerical model, one can evaluate the gradient $\nabla u(\bx)$ with the same computational cost as that of an evaluation of $u(\bx)$, as we shall explain below.

To model the structure, we consider a linear elasticity problem in two spatial dimensions under plane stress assumption. 
After finite element discretization, the smallest resonance frequency $u(\bx)$ is defined as the minimum of a Rayleigh quotient
$$
 u(\bx) = \min_{v\in\R^n} \frac{v^T K(\bx) v}{v^T M v},
$$
where $K(\bx)\in\R^{n\times n}$ and $M\in\R^{n\times n}$ are the stiffness and the mass matrices given by
\begin{align*}
 K_{ij}(\bx) &= \int_\Omega \left\langle \frac{E(\bx)}{1+\nu}\varepsilon(\phi_i) +\frac{\nu E(\bx)}{1-\nu^2}\trace(\varepsilon(\phi_i)) I_2,\varepsilon(\phi_j)\right\rangle_\text{F}\d\Omega ,\\
 M_{ij} &= \int_\Omega \langle \phi_i,\phi_j \rangle\, \d\Omega.
\end{align*}
Here, $n=960$ is the number of nodes in the finite element mesh, $\phi_i:\Omega\rightarrow\R^2$ is the $i$-th finite element function, $\varepsilon(v)=\frac{1}{2}(\nabla v + \nabla v^T)\in\R^{2\times 2}$ is the strain tensor, $\langle\cdot,\cdot\rangle_F$ is the Frobenius scalar product in $\R^{2\times 2}$, and $\langle\cdot,\cdot\rangle$ the canonical scalar product in $\R^2$.
The Poisson coefficient is set to $\nu=0.3$ and the Young modulus field $E(\bx):\Omega\rightarrow\R$ is parameterized by a $d=32$-dimensional parameter $\bx\in\R^d$ as follows,
$$
 E(\bx) = \exp\left(\sum_{i=1}^{32} x_i \sqrt{\sigma_i}\psi_i \right),
$$
where $\psi_i:\Omega\rightarrow\R$ and $\sigma_i$ are the $i$-th leading eigenfunctions and eigenvalues of the Gaussian kernel $c(s,t)=\sqrt{5}\exp(-\|s-t\|_2^2/{20})$. We endow the parameter $\bX$ with the standard normal distribution on $\R^{32}$.

\begin{figure}[t]
  \centering 
  \begin{subfigure}[t]{.45\textwidth}
  \centering
  \includegraphics[width=1\linewidth]{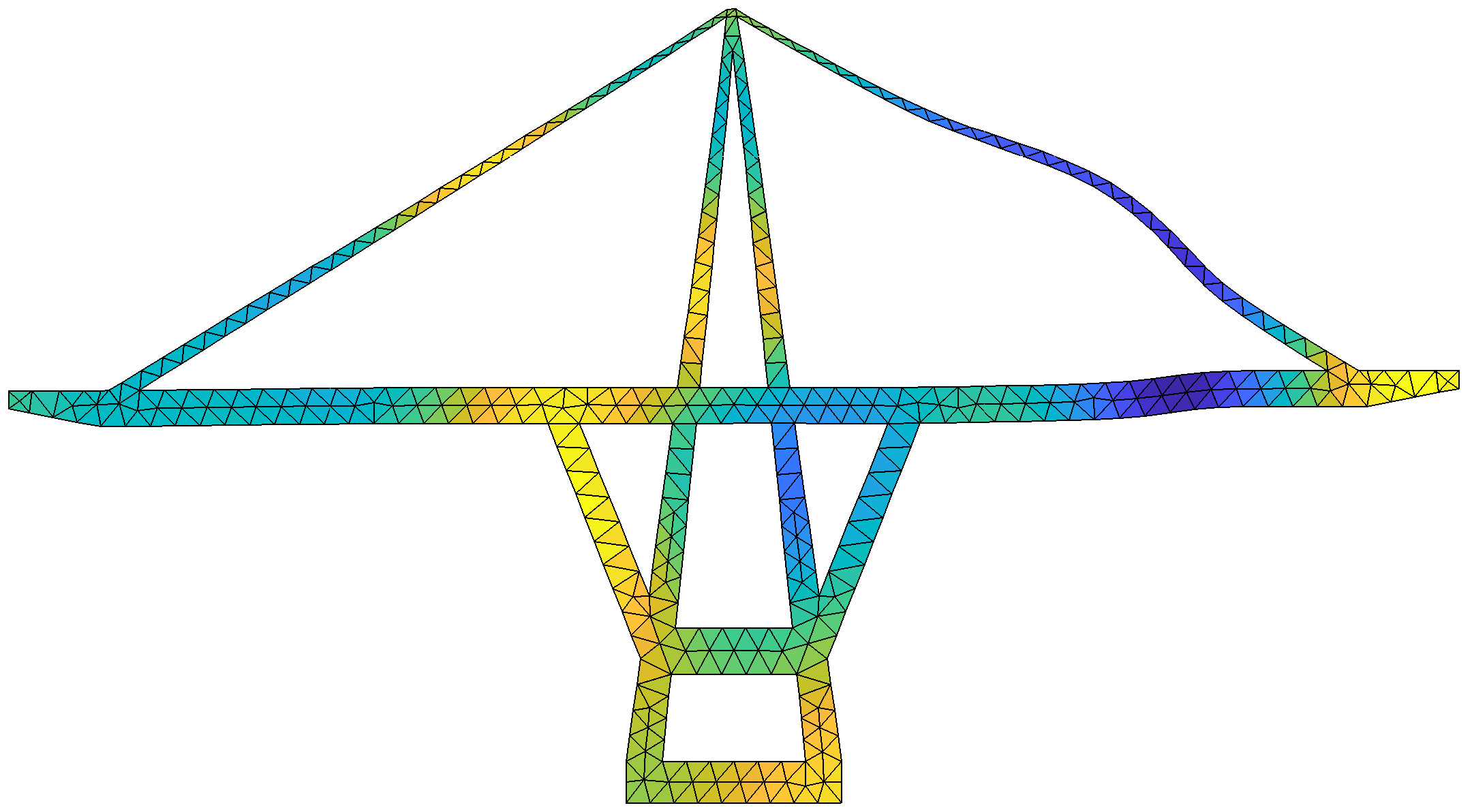} 
\end{subfigure}
\begin{subfigure}[t]{.45\textwidth}
  \centering
  \includegraphics[width=1\linewidth]{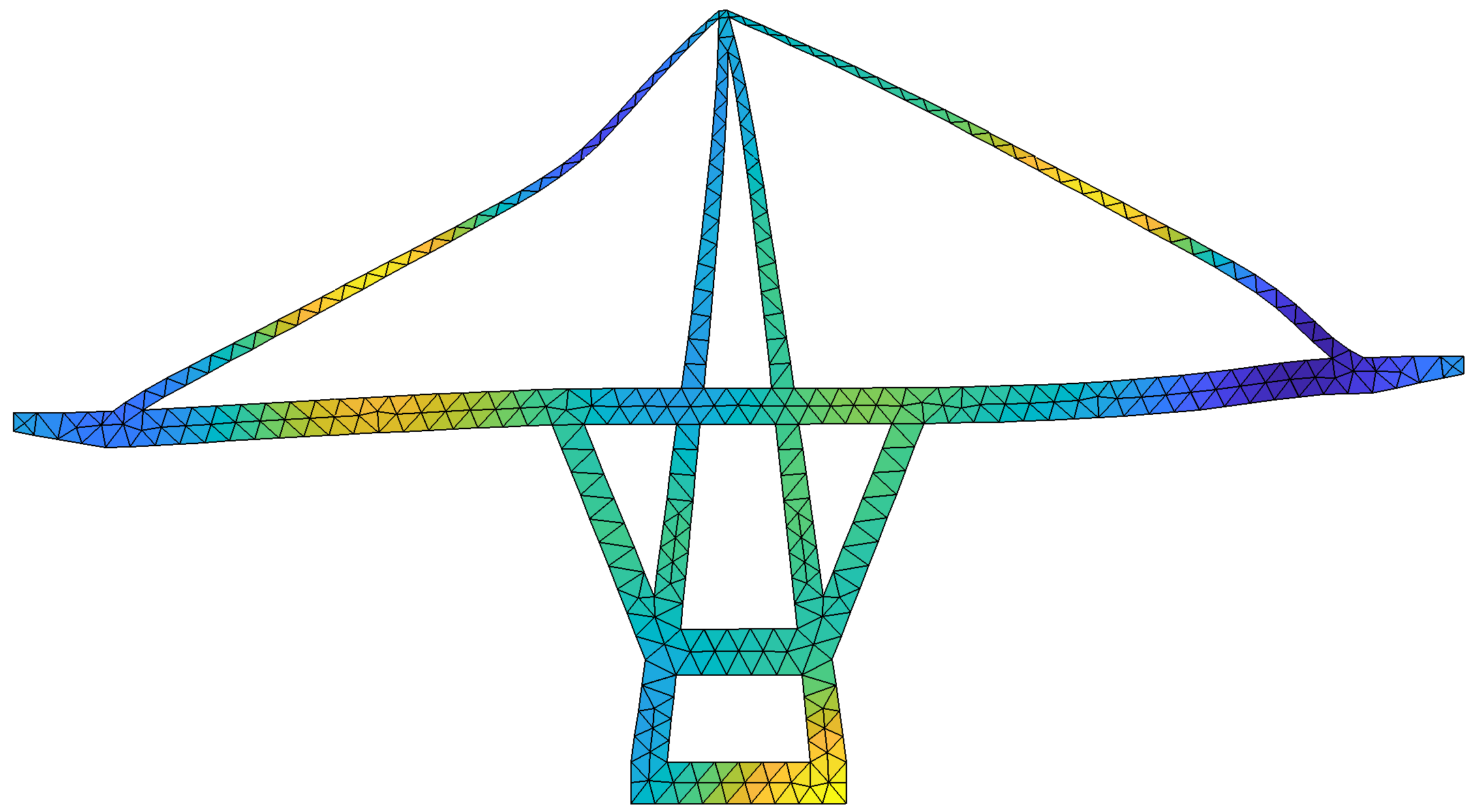} 
\end{subfigure}
\begin{subfigure}[t]{.45\textwidth}
  \centering
  \includegraphics[width=1\linewidth]{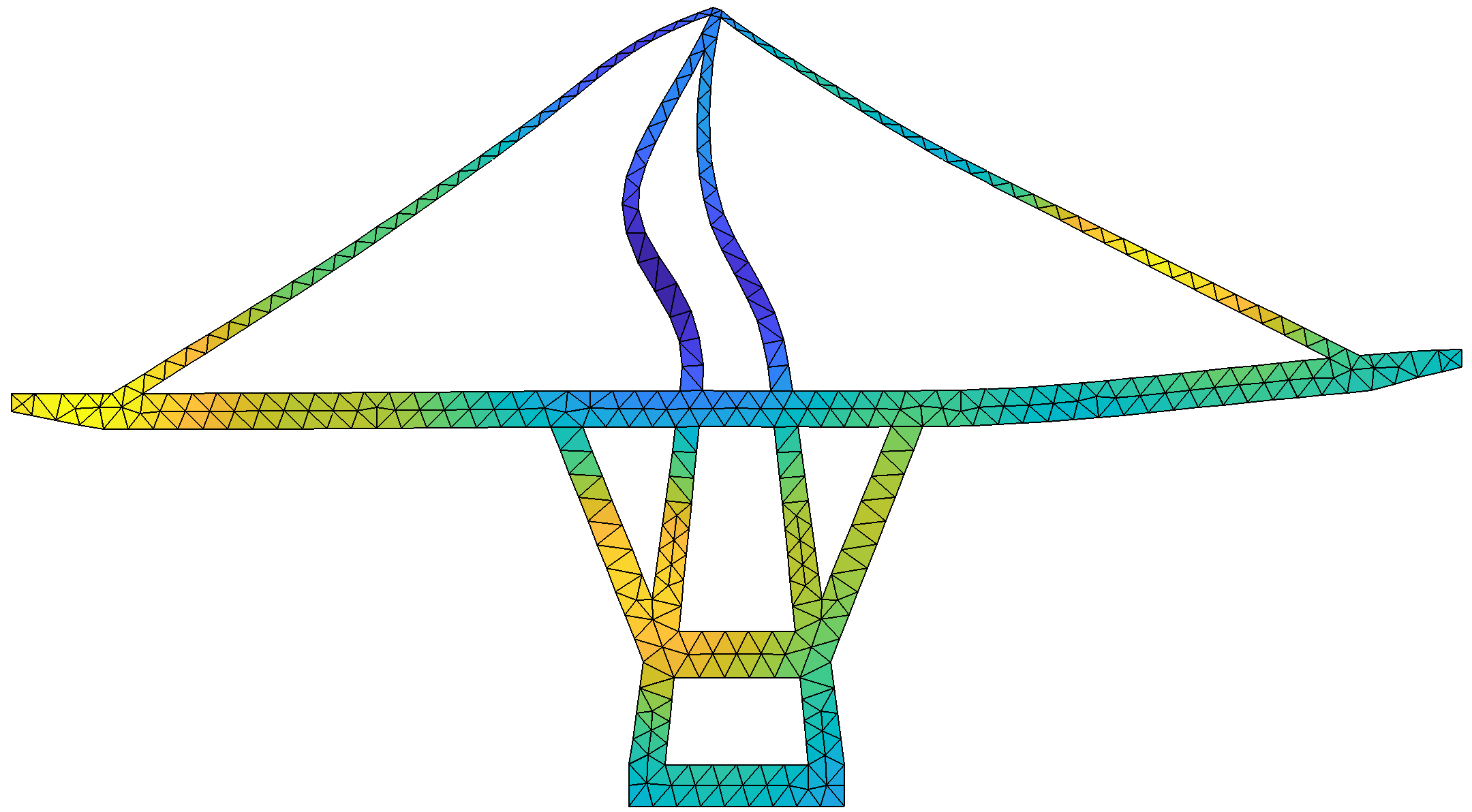} 
\end{subfigure}
\begin{subfigure}[t]{.45\textwidth}
  \centering
  \includegraphics[width=1\linewidth]{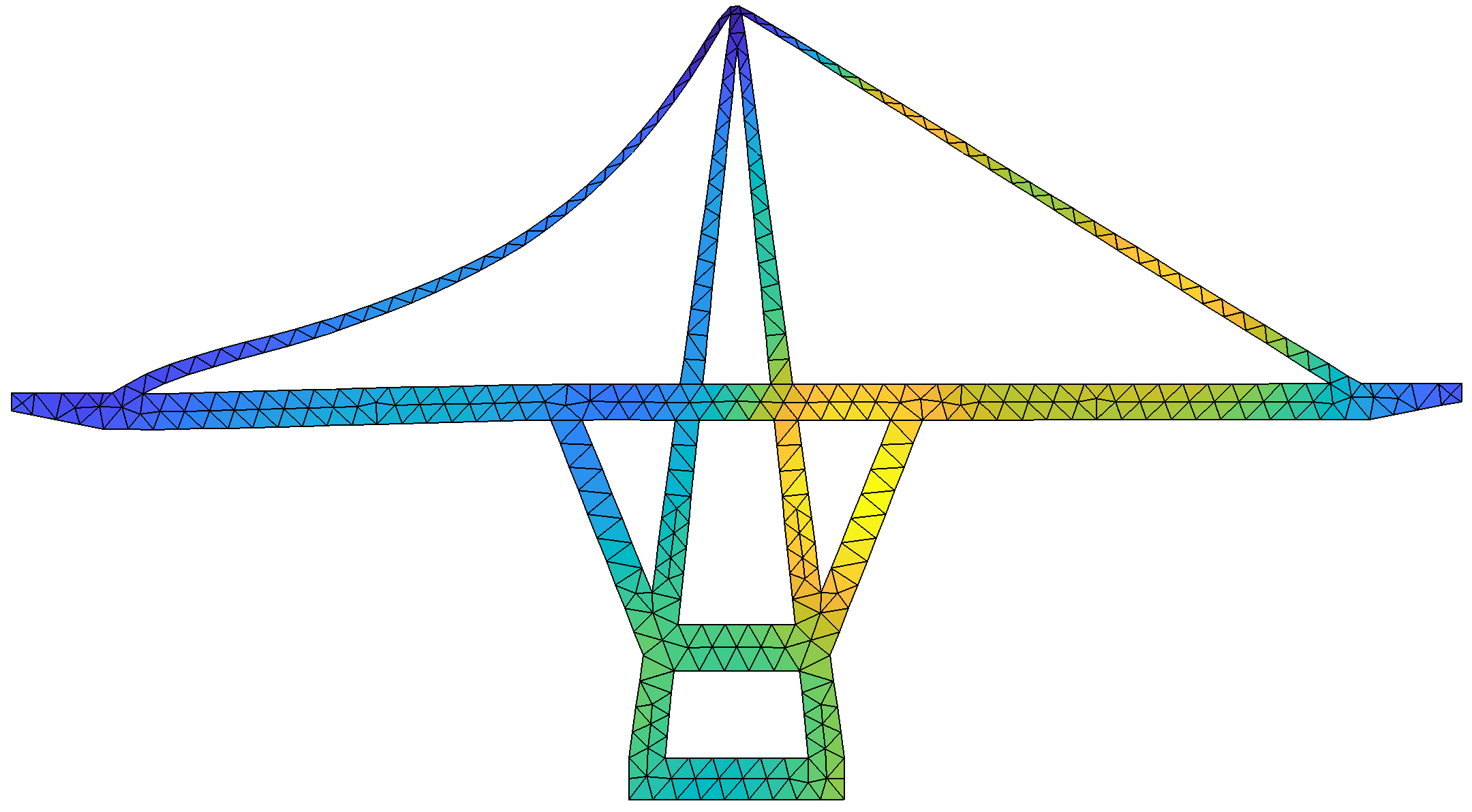} 
\end{subfigure}
  \caption{Resonance frequency of a bridge. Four realizations of the Young modulus field $E(\bX)$ (color of the elements) and the associated resonance mode $v(\bX)$ (displacement of the mesh).}
  \label{fig:Morandi}
\end{figure}

We denote by 
$$
 v(\bx) = \underset{v\in\R^n}{\text{argmin }} \frac{v^T K(\bx) v}{v^T M v}, 
$$
the minimizer of the Rayleigh quotient (i.e., the eigenvector associated to the eigenvalue/frequency $u(\bx)$). The $i$-th component of $\nabla u(\bx) = (\partial_{x_1} u(\bx) ,\cdots, \partial_{x_d} u(\bx))$ can be written as
\begin{equation}\label{eq:grad_Morandi}
 \partial_{x_i} u(\bx) = \frac{v(\bx)^T \big(\partial_{x_i} K(\bx) \big) v(\bx)}{v(\bx)^T M v(\bx)}.
\end{equation}
To show this, let us write $u(\bx) = R(v(\bx),\bx)$ where $R(v,x)=\frac{v^T K(\bx) v}{v^T M v}$ is the Rayleigh quotient. By definition of $v(\bx)$ we have $\nabla_v R(v(\bx),\bx) = 0$ so that a chain rule derivative yields $\partial_{x_i} u(\bx) = \nabla_v R(v(\bx),\bx)^T \partial_{x_i} v(\bx)+ \partial_{x_i} R(v(\bx),\bx) = \partial_{x_i} R(v(\bx),\bx)$, which is \eqref{eq:grad_Morandi}.
By definition of $E(\bx)$ and $K(\bx)$, the matrix $\partial_{x_i} K(\bx)$ is given by
$$
 \partial_{x_i} K_{kl}(\bx) = \int_\Omega \sqrt{\sigma_i}\psi_i \left\langle \frac{E(\bx)}{1+\nu}\varepsilon(\phi_k) +\frac{\nu E(\bx)}{1-\nu^2}\trace(\varepsilon(\phi_k)) I_2,\varepsilon(\phi_l)\right\rangle_\text{F}\d\Omega.
$$
The cost of assembling $\partial_{x_i} K$ for $1\leq i \leq d$ is negligible compared to  the cost of computing the eigenmode $v(\bx)$, which requires an expensive inverse power iteration method. In other words, once $v(\bx)$ is computed, one can evaluate both $u(\bx)$ and $\nabla u(\bx)$ almost for free.

In Table \ref{tab:Bridge} we report the performance of Algorithm \ref{alg:CV} on this benchmark, for a sample size $N=100$ and a range of values of $m$. The best performance is obtained with an intermediate dimension of $m=3$. For $m=8$ or $m=16$, the mean squared error is slightly higher than for $m=d$, meaning when we don't reduce the dimension. 
As before, we observe that a small intermediate dimension $m$ yields complex feature maps $g$ (i.e., large $\#\Lambda_K$) and simple profiles $f$ (i.e., small $\#\Gamma_L$). 

\begin{table}[t]
\footnotesize
\centering

\setlength\tabcolsep{0.1 pt}
\begin{tabularx}{1\textwidth}
{|>{\centering\arraybackslash}X|>{\centering\arraybackslash}X|>{\centering\arraybackslash}X|>{\centering\arraybackslash}X|>{\centering\arraybackslash}X|>{\centering\arraybackslash}X|>{\centering\arraybackslash}X|>{\centering\arraybackslash}X|>{\centering\arraybackslash}X|}
\hline
    & $m=1$        & $m=2$        & $m=3$        & $m=4$        & $m=6$        & $m=8$        & $m=16$       & $m=d=32$ \\\hline
  Mean$ \times 10^{12}$  & $1.6$ & $1.5$ & $\bf 1.1$ & $1.2$ & $1.3$ & $1.5$ & $1.6$ & $1.4$ \\\hline
  Std$ \times 10^{12}$  & $0.80$ & $0.69$ & $\bf 0.22$ & $0.24$ & $0.28$ & $0.83$ & $0.39$ & $0.43$  \\\hline
  $\#\Lambda_K$ &$148\,(\pm 64)$ & $129\,(\pm 45)$ & $91\,(\pm 21)$ & $80\,(\pm 23)$ & $64\,(\pm 16)$ & $57\,(\pm 9)$ & $51\,(\pm 1)$ & $32\,(\pm 0)$ \\\hline
  $\#\Gamma_L$ &$5\,(\pm 1)$ & $8\,(\pm 1)$ & $11\,(\pm 1)$ & $15\,(\pm 3)$ & $24\,(\pm 7)$ & $44\,(\pm 24)$ & $133\,(\pm 102)$ & $102\,(\pm 70)$ \\\hline
\end{tabularx}
\caption{Bridge. Mean and standard deviation (std) of the mean squared error $\E[(u(\bX)-f\circ g(\bX))^2]$ over $20$ experiments, where $g$ and $f$ are constructed using Algorithm \ref{alg:CV} with $N=100$ samples. The error $\E[(u(\bX)-f\circ g(\bX))^2]$ is computed on a (fixed) validation set of size $1000$. The last two lines of the table give the mean($\pm\,$std) of the cardinalities $\#\Lambda_K$ and $\#\Gamma_L$, which represent the complexity of $g$ and $f$, respectively.}
\label{tab:Bridge}
\end{table}

\section{Conclusion}

We have proposed and analyzed a novel framework for the dimension reduction of multivariate functions. Our approach relies on gradient evaluations of the model $u:\R^d\rightarrow\R$ and is a two-step procedure. First, we build a feature map $g:\R^d\rightarrow\R^m$ in a function space $\mathcal{G}_m$ by aligning the Jacobian of $g$ with the gradients of $u$. Second, we build a profile function $f:\R^m\rightarrow\R$ by minimizing the mean squared error between $u$ and $f\circ g$. 
We prove that having a finite Poincar\'e constant $\mathbb{C}(\bX|\mathcal{G}_m)$ ensures good theoretical properties of the feature map---namely that the objective used to identify $g$ bounds the $L^2$ error between $u$ and its approximation. The Poincar\'e constant depends both on the probability measure of the inputs $\bX$ and on the feature space $\mathcal{G}_m$. In practice we observe good approximation performance using polynomial spaces $\mathcal{G}_m$, constructed via a greedy adaptive procedure, but we cannot easily check that $\mathbb{C}(\bX|\mathcal{G}_m)<\infty$ for this case. Indeed, theoretically guaranteeing that $\mathbb{C}(\bX|\mathcal{G}_m)< \infty$ for a computationally feasible space of nonlinear feature maps $\mathcal{G}_m$ remains a challenge.

Our numerical experiments also illustrate the role of the intermediate dimension $m$ in this setting. It is natural to ask what is the \emph{intrinsic} intermediate dimension $m$ of a model $u$? From a theoretical perspective, we argue that this question is void without specifying a function class $\mathcal{G}_m$ for $g$. For instance, we can talk about the \emph{linear} or \emph{quadratic} intrinsic intermediate dimension of $u$ as the smallest $m$ such that there exists a linear or a quadratic $g$ so that the error $\E[(u(\bX)-f\circ g(\bX))^2]$ is less than a prescribed tolerance for some $f:\R^m\rightarrow\R$. The OMP-type algorithm we propose, which adapts the complexity of $\mathcal{G}_m$ to the sample size, then makes the interpretation of $m$ more complicated.

A useful alternative question is how to optimally select the intermediate dimension $m$ in practice? For now, we have no way to select it \textit{a priori}. In our numerical tests, we run the algorithm for all possible values of $m=1,\hdots,d$ and select the intermediate dimension which yields the lowest cross-validation error. We have observed that the intermediate dimension which yields the smallest reconstruction error depends on the sample size $N$: for instance, in the small sample size regime, an intermediate dimension of $m=2$ or $3$ might yield better approximation while, in the large sample size regime, no dimension reduction, i.e., $m=d$, could be a better choice. This trend depends very much on the target function $u$, and we show examples where an intermediate value of $m$ is best over a range of sample sizes. 

The minimization of the function $J(g)$ turns out to be quite a challenging task. While the quasi-Newton method proposed here is generally effective, 
recent work \cite{lasserre2020minimizing} may offer a novel optimization perspective to address the essential problem of minimizing sums of generalized Rayleigh quotients.

Another interesting direction motivated by the present work is the recursive construction of approximations of the form $f_{k}\circ f_{k-1}\circ \hdots \circ f_1$, where each $f_i$ is built using gradients of $u$. This composition is related to deep neural network architectures for function approximation, and may offer a perspective on the choice of latent space and internal dimension in such methods.

\section*{Acknowledgment}

The authors gratefully acknowledge support from the Inria associate team UNQUESTIONABLE.
CP and OZ also acknowledge support from CIROQUO consortium.
DB and YMM also acknowledge support from the US Department of Energy, Office of Advanced Scientific Computing Research, AEOLUS project.

\bibliographystyle{siam}

\begin{thebibliography}{10}

\bibitem{absil2009optimization}
{\sc P.-A. Absil, R.~Mahony, and R.~Sepulchre}, {\em Optimization algorithms on
  matrix manifolds}, Princeton University Press, 2009.

\bibitem{Adragni09}
{\sc K.~P. Adragni and R.~D. Cook}, {\em Sufficient dimension reduction and
  prediction in regression}, Philosophical Transactions of the Royal Society A:
  Mathematical, Physical and Engineering Sciences, 367 (2009), pp.~4385--4405.

\bibitem{nouy_anthony_2020_3653971}
{\sc N.~Anthony, G.~Erwan, and G.~Loic}, {\em Approximationtoolbox}, Feb. 2020.

\bibitem{bakry2008simple}
{\sc D.~Bakry, F.~Barthe, P.~Cattiaux, A.~Guillin, et~al.}, {\em A simple proof
  of the poincar{\'e} inequality for a large class of probability measures},
  Electronic Communications in Probability, 13 (2008), pp.~60--66.

\bibitem{beck2013sparsity}
{\sc A.~Beck and Y.~C. Eldar}, {\em Sparsity constrained nonlinear
  optimization: Optimality conditions and algorithms}, SIAM Journal on
  Optimization, 23 (2013), pp.~1480--1509.

\bibitem{boucheron2013concentration}
{\sc S.~Boucheron, G.~Lugosi, and P.~Massart}, {\em Concentration inequalities:
  A nonasymptotic theory of independence}, Oxford university press, 2013.

\bibitem{brennan2020greedy}
{\sc M.~C. Brennan, D.~Bigoni, O.~Zahm, A.~Spantini, and Y.~Marzouk}, {\em
  Greedy inference with structure-exploiting lazy maps}, arXiv preprint
  arXiv:1906.00031,  (2020).

\bibitem{chkifa2015breaking}
{\sc A.~Chkifa, A.~Cohen, and C.~Schwab}, {\em Breaking the curse of
  dimensionality in sparse polynomial approximation of parametric pdes},
  Journal de Math{\'e}matiques Pures et Appliqu{\'e}es, 103 (2015),
  pp.~400--428.

\bibitem{cohen2012capturing}
{\sc A.~Cohen, I.~Daubechies, R.~DeVore, G.~Kerkyacharian, and D.~Picard}, {\em
  Capturing ridge functions in high dimensions from point queries},
  Constructive Approximation, 35 (2012), pp.~225--243.

\bibitem{cohen2018multivariate}
{\sc A.~Cohen and G.~Migliorati}, {\em Multivariate approximation in downward
  closed polynomial spaces}, in Contemporary Computational Mathematics-A
  celebration of the 80th birthday of Ian Sloan, Springer, 2018, pp.~233--282.

\bibitem{constantine2015}
{\sc P.~G. Constantine}, {\em Active subspaces: Emerging ideas for dimension
  reduction in parameter studies}, SIAM, 2015.

\bibitem{Constantine2014a}
{\sc P.~G. Constantine, E.~Dow, and Q.~Wang}, {\em Active subspace methods in
  theory and practice: applications to kriging surfaces}, SIAM Journal on
  Scientific Computing, 36 (2014), pp.~A1500--A1524.

\bibitem{cookweis1991}
{\sc R.~D. Cook and S.~Weisberg}, {\em Discussion of sliced inverse regression
  for dimension reduction}, Journal of the American Statistical Association, 86
  (1991), pp.~328--332.

\bibitem{cui2020data}
{\sc T.~Cui and O.~Zahm}, {\em Data-free likelihood-informed dimension
  reduction of bayesian inverse problems},  (2020).

\bibitem{dennis1977quasi}
{\sc J.~E. Dennis, Jr and J.~J. Mor{\'e}}, {\em Quasi-newton methods,
  motivation and theory}, SIAM review, 19 (1977), pp.~46--89.

\bibitem{fornasier2012learning}
{\sc M.~Fornasier, K.~Schnass, and J.~Vybiral}, {\em Learning functions of few
  arbitrary linear parameters in high dimensions}, Foundations of Computational
  Mathematics, 12 (2012), pp.~229--262.

\bibitem{golub2013matrix}
{\sc G.~H. Golub and C.~F. Van~Loan}, {\em Matrix computations}, vol.~3, JHU
  press, 2013.

\bibitem{grelier2018learning}
{\sc E.~Grelier, A.~Nouy, and M.~Chevreuil}, {\em Learning with tree-based
  tensor formats}, arXiv preprint arXiv:1811.04455,  (2018).

\bibitem{griewank1989automatic}
{\sc A.~Griewank et~al.}, {\em On automatic differentiation}, Mathematical
  Programming: recent developments and applications, 6 (1989), pp.~83--107.

\bibitem{hokanson2018data}
{\sc J.~M. Hokanson and P.~G. Constantine}, {\em Data-driven polynomial ridge
  approximation using variable projection}, SIAM Journal on Scientific
  Computing, 40 (2018), pp.~A1566--A1589.

\bibitem{kokiopoulou2011trace}
{\sc E.~Kokiopoulou, J.~Chen, and Y.~Saad}, {\em Trace optimization and
  eigenproblems in dimension reduction methods}, Numerical Linear Algebra with
  Applications, 18 (2011), pp.~565--602.

\bibitem{krantz2012implicit}
{\sc S.~G. Krantz and H.~R. Parks}, {\em The implicit function theorem:
  history, theory, and applications}, Springer Science \& Business Media, 2012.

\bibitem{lam2020multifidelity}
{\sc R.~R. Lam, O.~Zahm, Y.~M. Marzouk, and K.~E. Willcox}, {\em Multifidelity
  dimension reduction via active subspaces}, SIAM Journal on Scientific
  Computing, 42 (2020), pp.~A929--A956.

\bibitem{lasserre2020minimizing}
{\sc J.~B. Lasserre, V.~Magron, S.~Marx, and O.~Zahm}, {\em Minimizing rational
  functions: a hierarchy of approximations via pushforward measures}, arXiv
  preprint arXiv:2012.05793,  (2020).

\bibitem{lataniotis2020extending}
{\sc C.~Lataniotis, S.~Marelli, and B.~Sudret}, {\em Extending classical
  surrogate modeling to high dimensions through supervised dimensionality
  reduction: a data-driven approach}, International Journal for Uncertainty
  Quantification, 10 (2020).

\bibitem{laurent2019overview}
{\sc L.~Laurent, R.~Le~Riche, B.~Soulier, and P.-A. Boucard}, {\em An overview
  of gradient-enhanced metamodels with applications}, Archives of Computational
  Methods in Engineering, 26 (2019), pp.~61--106.

\bibitem{lee2013general}
{\sc K.-Y. Lee, B.~Li, F.~Chiaromonte, et~al.}, {\em A general theory for
  nonlinear sufficient dimension reduction: Formulation and estimation}, Annals
  of Statistics, 41 (2013), pp.~221--249.

\bibitem{li2018sufficient}
{\sc B.~Li}, {\em Sufficient dimension reduction: Methods and applications with
  R}, CRC Press, 2018.

\bibitem{li1991sliced}
{\sc K.-C. Li}, {\em Sliced inverse regression for dimension reduction},
  Journal of the American Statistical Association, 86 (1991), pp.~316--327.

\bibitem{migliorati2015adaptive}
{\sc G.~Migliorati}, {\em Adaptive polynomial approximation by means of random
  discrete least squares}, in Numerical Mathematics and Advanced
  Applications-ENUMATH 2013, Springer, 2015, pp.~547--554.

\bibitem{migliorati2019adaptive}
\leavevmode\vrule height 2pt depth -1.6pt width 23pt, {\em Adaptive
  approximation by optimal weighted least-squares methods}, SIAM Journal on
  Numerical Analysis, 57 (2019), pp.~2217--2245.

\bibitem{parente2020generalized}
{\sc M.~T. Parente, J.~Wallin, B.~Wohlmuth, et~al.}, {\em Generalized bounds
  for active subspaces}, Electronic Journal of Statistics, 14 (2020),
  pp.~917--943.

\bibitem{peng2016polynomial}
{\sc J.~Peng, J.~Hampton, and A.~Doostan}, {\em On polynomial chaos expansion
  via gradient-enhanced $\ell$1-minimization}, Journal of Computational Physics,
  310 (2016), pp.~440--458.

\bibitem{pinkus2015ridge}
{\sc A.~Pinkus}, {\em Ridge functions}, vol.~205, Cambridge University Press,
  2015.

\bibitem{plessix2006review}
{\sc R.-E. Plessix}, {\em A review of the adjoint-state method for computing
  the gradient of a functional with geophysical applications}, Geophysical
  Journal International, 167 (2006), pp.~495--503.

\bibitem{saltelli2008global}
{\sc A.~Saltelli, M.~Ratto, T.~Andres, F.~Campolongo, J.~Cariboni, D.~Gatelli,
  M.~Saisana, and S.~Tarantola}, {\em Global sensitivity analysis: the primer},
  John Wiley \& Sons, 2008.

\bibitem{scheiblechner2007complexity}
{\sc P.~Scheiblechner}, {\em On the complexity of deciding connectedness and
  computing betti numbers of a complex algebraic variety}, Journal of
  Complexity, 23 (2007), pp.~359--379.

\bibitem{stewart1990matrix}
{\sc G.~W. Stewart}, {\em Matrix perturbation theory},  (1990).

\bibitem{borehole}
{\sc S.~Surjanovic and D.~Bingham}, {\em Virtual library of simulation
  experiments}, 2013.

\bibitem{tropp2007signal}
{\sc J.~A. Tropp and A.~C. Gilbert}, {\em Signal recovery from random
  measurements via orthogonal matching pursuit}, IEEE Transactions on
  information theory, 53 (2007), pp.~4655--4666.

\bibitem{villani2008optimal}
{\sc C.~Villani}, {\em Optimal transport: old and new}, vol.~338, Springer
  Science \& Business Media, 2008.

\bibitem{wang2018efficient}
{\sc X.~Wang, L.~Wang, and Y.~Xia}, {\em An efficient global optimization
  algorithm for maximizing the sum of two generalized rayleigh quotients},
  Computational and Applied Mathematics, 37 (2018), pp.~4412--4422.

\bibitem{wu2008kernel}
{\sc H.-M. Wu}, {\em Kernel sliced inverse regression with applications to
  classification}, Journal of Computational and Graphical Statistics, 17
  (2008), pp.~590--610.

\bibitem{yeh2008nonlinear}
{\sc Y.-R. Yeh, S.-Y. Huang, and Y.-J. Lee}, {\em Nonlinear dimension reduction
  with kernel sliced inverse regression}, IEEE transactions on Knowledge and
  Data Engineering, 21 (2008), pp.~1590--1603.

\bibitem{zahm2020gradient}
{\sc O.~Zahm, P.~G. Constantine, C.~Prieur, and Y.~M. Marzouk}, {\em
  Gradient-based dimension reduction of multivariate vector-valued functions},
  SIAM Journal on Scientific Computing, 42 (2020), pp.~A534--A558.

\bibitem{zahm2018certified}
{\sc O.~Zahm, T.~Cui, K.~Law, A.~Spantini, and Y.~Marzouk}, {\em Certified
  dimension reduction in nonlinear bayesian inverse problems}, arXiv preprint
  arXiv:1807.03712,  (2018).

\bibitem{zhang2019learning}
{\sc G.~Zhang, J.~Zhang, and J.~Hinkle}, {\em Learning nonlinear level sets for
  dimensionality reduction in function approximation}, in Advances in Neural
  Information Processing Systems, 2019, pp.~13199--13208.

\bibitem{zhang2013optimizing}
{\sc L.-H. Zhang}, {\em On optimizing the sum of the rayleigh quotient and the
  generalized rayleigh quotient on the unit sphere}, Computational Optimization
  and Applications, 54 (2013), pp.~111--139.

\bibitem{zhang2014self}
\leavevmode\vrule height 2pt depth -1.6pt width 23pt, {\em On a
  self-consistent-field-like iteration for maximizing the sum of the rayleigh
  quotients}, Journal of Computational and Applied Mathematics, 257 (2014),
  pp.~14--28.

\end{thebibliography}

\appendix

\section{Link with the loss function introduced in \cite{zhang2019learning}}\label{appA}

As in Example~\ref{example:C1diffeo}, let $\phi:\calX\rightarrow\calX$ be a $C^1$-diffeomorphism and let $g:\calX\rightarrow\R^m$ be a feature map defined by $g(\bx) = (\phi_1(\bx),\hdots,\phi_m(\bx))$. In \cite{zhang2019learning}, the diffeomorphism $\phi$ is built by minimizing the loss function
$$
 \mathcal{L}_\omega(\phi) \coloneqq \E\left[ \sum_{i=1}^d \omega_i \left\langle \frac{\nabla \phi_{i}(\bX)}{\|\nabla \phi_{i}(\bX)\|},  \nabla u({\bX})  \right\rangle^2 \right],
$$
where $\omega=(\omega_1,\hdots,\omega_d)\in\R^d_{\geq0}$ are non-negative weights which are arbitrarily chosen.
To link this loss function with the proposed cost function $J(g)$, let us assume that the orthogonality condition
\begin{equation}\label{eq:OrthogonalityCondition}
 \nabla\phi_i(\bx)^T \nabla\phi_j(\bx) = 0  ,
\end{equation}
holds for any $i\neq j$ and for any $\bx\in\calX$. Under this assumption, the cost function $J(g)$ can be written as
\begin{align*}
 J(g) &= \E\left[ \big\| (I_d - \Pi_{\mathrm{range}(\nabla g({\bX})^T)})  \nabla u({\bX}) \big\|_2^2 \right]\\
 &\overset{\eqref{eq:OrthogonalityCondition}}{=} \E\left[ \sum_{i=m+1}^d  \left\langle \frac{\nabla \phi_{i}(\bX)}{\|\nabla \phi_{i}(\bX)\|},  \nabla u({\bX})  \right\rangle^2 \right] \\
 &= \mathcal{L}_\omega(\phi),
\end{align*}
where the last equality is obtained by letting
$$
 \omega = (\underbrace{0,\hdots,0}_{m\text{ times}} , \underbrace{1,\hdots,1}_{d-m\text{ times}}).
$$

In \cite{zhang2019learning}, the loss function $\mathcal{L}_\omega(\phi)$ is used without ensuring the orthogonality condition \eqref{eq:OrthogonalityCondition} and no theoretical justification is provided. 
For instance, without condition \eqref{eq:OrthogonalityCondition}, it is unclear whether $\mathcal{L}_\omega(\phi)=0 $ implies $u(\bx)=f\circ g(\bx)$ or, more critically, if $u(\bx)=f\circ g(\bx)$ implies $\mathcal{L}_\omega(\phi)=0 $.


\section{Proof of Proposition \ref{prop:gradR}}\label{proof:gradR}

 We use the notation $M_\mathrm{sym}=(M+M^T)/2$ for the symmetric part of a square matrix $M$.
 For any $\|\delta G\|\leq \varepsilon$ we can write
 \begin{align*}
  (G+\delta G)^T A(\bX)(G+\delta G) 
  &= G^T A(\bX)G + 2(\delta G^T A(\bX) G )_\mathrm{sym} +\mathcal{O}(\|\delta G\|^2) ,
 \end{align*}
 and
 \begin{align*}
  \big( (G+ &\delta G)^T B(\bX) (G+\delta G)\big)^{-1}  \\
  &= \left( G^T B(\bX)G + 2(\delta G^T B(\bX) G)_\mathrm{sym} +\mathcal{O}(\|\delta G\|^2) \right)^{-1}  \\
  &= (G^T B(\bX)G )^{-1} 
  - 2(G^T B(\bX)G )^{-1}  (\delta G^T B(\bX) G)_\mathrm{sym} (G^T B(\bX)G )^{-1} 
  + \mathcal{O}(\|\delta G\|^2).
 \end{align*}
 Multiplying the two above quantities yields
 \begin{align*}
  \Big((G+& \delta G)^T A(\bX)(G+\delta G) \Big)\Big((G+\delta G)^T B(\bX) (G+\delta G)\Big)^{-1}  \\
  &= (G^T A(\bX)G )(G^T B(\bX)G )^{-1} + 2(\delta G^T A(\bX) G )_\mathrm{sym}(G^T B(\bX)G )^{-1} \\
  &- 2(G^T A(\bX)G)(G^T B(\bX)G )^{-1}  (\delta G^T B(\bX) G )_\mathrm{sym} (G^T B(\bX)G )^{-1} 
  +\mathcal{O}(\|\delta G\|^2).
 \end{align*}
 Taking the expectation of the trace yields
 \begin{align*}
  &\mathcal{R}(G+\delta G) = \mathcal{R}(G) + \E\left[\trace\left( 2 \delta G^T A(\bX) G (G^T B(\bX)G )^{-1} \right)\right] \\
  &- \E\left[\trace\left(  2(G^T A(\bX)G)(G^T B(\bX)G )^{-1}  (\delta G^T B(\bX) G)  (G^T B(\bX)G )^{-1}   \right)\right] +\mathcal{O}(\|\delta G\|^2).
 \end{align*}
 Here we used the fact that $\trace(M_\mathrm{sym}S)=\trace(MS)$ holds for any square matrix $M$ and any symmetric matrix $S$. 
 Using the notation $\langle M,N\rangle = \trace(MN^T)$, we can write $\mathcal{R}(G+\delta G) = \mathcal{R}(G) + \langle\nabla\mathcal{R}(G),\delta G\rangle +\mathcal{O}(\|\delta G\|^2)$ where
 \begin{align}
  \nabla\mathcal{R}(G) &= 2 \E\left[ A(\bX)G(G^T B(\bX)G )^{-1} \right] \nonumber\\
  &-2\E\left[ B(\bX)G (G^T B(\bX)G )^{-1} G^T A(\bX)G(G^T B(\bX)G )^{-1} \right]. \nonumber
 \end{align}
 This shows that $\mathcal{R}(\cdot)$ is differentiable at $G$.
 Finally, the expression \eqref{eq:gradR} of $\nabla\mathcal{R}(G)$ is obtained by using the definitions of $H(G)$ and $\Sigma(G)$ (see \eqref{eq:H} and \eqref{eq:Sigma}) and by using the fact that $(S_1GS_2)_{\mathrm{vec}} = (S_2\otimes S_1)G_{\mathrm{vec}}$ for any symmetric matrices $S_1,S_2$.
 Both $H(G)$ and $\Sigma(G)$ are symmetric positive semidefinite, as the expectations of the Kronecker products of symmetric positive semidefinite matrices.

\end{document}